\numberwithin{equation}{section}
\newtheorem{theorem}{Theorem}[section]
\newtheorem{corollary}[theorem]{Corollary}
\theoremstyle{definition}
\newtheorem{definition}[theorem]{Definition}
\newtheorem{example}[theorem]{Example}
\theoremstyle{remark}
\newtheorem{remark}[theorem]{Remark}
\DeclareMathOperator{\dar}{dar}
\DeclareMathOperator{\sech}{sech}
\definecolor{webgreen}{rgb}{0,.5,0}
\definecolor{webbrown}{rgb}{.6,0,0}
\newcommand{\seqnum}[1]{\href{https://oeis.org/#1}{\rm \underline{#1}}}
\newcommand{\subjclass}[2][2020]{%
  \let\@oldtitle\@title%
  \gdef\@title{\@oldtitle\footnotetext{#1 \emph{Mathematics subject classification:} #2.}}%
}
\newcommand{\keywords}[1]{%
  \let\@@oldtitle\@title%
  \gdef\@title{\@@oldtitle\footnotetext{\emph{Key words and phrases:} #1.}}%
}
\begin{document}

\title{Pseudo-involutions in the Riordan group\\and Chebyshev polynomials}

\author{
\textsc{Alexander Burstein}\\
Department of Mathematics\\
Howard University\\
Washington, DC 20059, USA\\
\texttt{aburstein@howard.edu} 
\and 
\textsc{Louis W. Shapiro}\\
Department of Mathematics\\
Howard University\\
Washington, DC 20059, USA\\
\texttt{lshapiro@howard.edu}
}

\date{January 31, 2025}

\keywords{Riordan group, Riordan array, pseudo-involution, Chebyshev polynomial}

\subjclass[2020]{05A15 (Primary) 05A05; 05A19; 11B83 (Secondary)}

\maketitle

\begin{center}
\emph{Dedicated to the memory of Emanuele Munarini and Renzo Sprugnoli}
\end{center}

\begin{abstract}
Generalizing the results in our previous paper, we consider pseudo-involutions in the Riordan group where the generating function $g$ for the first column of a Riordan array satisfies a functional equation of certain types involving a polynomial. For those types of equations, we find the pseudo-involutory companion of $g$. We also develop a general method for finding $B$-functions of Riordan pseudo-involutions in the cases we consider, and show that these $B$-functions involve Chebyshev polynomials. We apply our method for several families of Riordan arrays, obtaining new results and deriving known results more efficiently.
\end{abstract}

\section{Introduction} \label{sec:intro}

This article is a followup to our previous paper~\cite{BS} on pseudo-involutions in the Riordan group. This group, defined by Shapiro et al.~\cite{SGWW}, contains infinite lower triangular matrices called Riordan arrays, whose properties are generalize those of the Pascal triangle. A detailed introduction to the Riordan group can be found in books by Barry~\cite{Barry} and Shapiro et al.~\cite{SSBCHMW}, a survey by Davenport et al.~\cite{DFSW}, and a lecture series by Zeleke~\cite{Zeleke}, as well as in the original Shapiro et al.~\cite{SGWW} paper and several others. Our paper~\cite{BS} also contains an extended introduction, with a special emphasis on involutions and pseudo-involutions. Several of these sources contain extended bibliographies of the Riordan group literature as well. Therefore, we will contain ourselves here to a brief, focused introduction that lays the groundwork for the results discussed later in the paper.

We divide the rest of this introduction into two parts. In Subsection~\ref{subsec:basics}, we present a few basic facts about Riordan arrays needed in the rest of the paper. In Subsection~\ref{subsec:org}, we discuss the organization of the paper and its main results.

\subsection{Basics of Riordan arrays} \label{subsec:basics}

Consider the set $\mathcal{F}=\mathbb{R}[[z]]$ of formal power series with real coefficients (although many results would readily generalize for an arbitrary field $F$ of characteristic $0$ in place of $\mathbb{R}$). For a function $f(z)\in\mathcal{F}$ such that $f(z)=a_nz^n+a^{n+1}z^{n+1}+\dots$, $n\ge 0$, and $a_n\ne 0$, we will call $n$ the \emph{order} of $f(z)$. Observe that functions in $g=g(z)\in\mathcal{F}_0$ have a reciprocal $1/g$, whereas functions $f=f(z)\in\mathcal{F}_1$ have a compositional inverse that we denote $\overline{f}$ (so that $f\circ\overline{f}=\overline{f}\circ f=\mathrm{id}$, the identity function $\mathrm{id}(z)=z$).

Given an ordered pair $(g,f)\in\mathcal{F}_0\times\mathcal{F}_1$, we can define a lower triangular matrix $D=[d_{n,k}]_{n,k\ge 0}$ such that the $[d_{n,k}]^\mathrm{T}_{n\ge 0}$, the $k$-th column of $D$, has the generating function $gf^k$, i.e.
\[
\sum_{n\ge 0}d_{n,k}z^n=g(z)f(z)^k, \qquad k\ge 0.
\]
We denote this by $D=(g,f)$. Thus, for example, the Pascal triangle (as a lower triangular matrix) is $P=\left(\frac{1}{1-z},\frac{z}{1-z}\right)$. We call such a matrix $D$ a \emph{Riordan array}.

For a function $h=h(z)=\sum_{k\ge 0}h_kz^n\in\mathbb{F}$, the generating function of the product $D[h_n]^\mathbb{T}_{n\ge 0}$ is
\[
\sum_{n\ge 0}h_k(gf^k)=g\sum_{n\ge 0}h_kf^k=gh(f)=g\cdot(h\circ f).
\]
Shapiro et al.~\cite{SGWW} called this property the \emph{Fundamental Theorem of Riordan Arrays}. It implies that for any two Riordan arrays $(G,F)$ and $(g,f)$, we have
\[
(g,f)(G,F)=(g\cdot (G\circ f),F\circ f)=(gG(f),F(f)),
\]
so the identity matrix $(1,z)$ is the identity element
and
\[
(g,f)^{-1}=\left(\frac{1}{g(\overline{f})},\overline{f}\right)
\]
under the multiplication operation that corresponds to the matrix multiplication, and thus the Riordan arrays form a group called the \emph{Riordan group}.

Riordan group contains several interesting subgroups a few of which we will mention here. The \emph{$k$-Bell subgroup} contains arrays of the form $(g,zg^k)$ for a fixed $k$ (also called the \emph{Appell subgroup} for $k=0$ and the \emph{Bell subgroup} for $k=1$). Other standard examples include the \emph{checkerboard subgroup} of matrices $(g(z^2),zf(z^2))$, where $g,f\in\mathcal{F}_0$, $g$ is even, and $f$ is odd; the \emph{Lagrange}, \emph{derivative}, and \emph{hitting-time} subgroups of arrays $(g,f)$, where $g=1$, $g=f'$, and $g=zf'/f$, respectively, and $f\in\mathcal{F}_1$ is arbitrary.

A more generalized version of the Riordan group is as follows. Let $c=(c_n)_{n\ge 0}$ be a sequence of nonzero elements of $F$. Then we say that $D=(g,f)_c$ if
\[
\frac{g(z)f(z)^k}{c_k}=\sum_{n\ge 0}\frac{d_{n,k}}{c_n}z^n, \qquad k\ge 0.
\]
The two most frequently considered examples of this \emph{$c$-Riordan group} are the Riordan group itself, when $c_n=1$ for all $n\ge 0$, and the \emph{exponential Riordan group}, when $c_n=n!$ for all $n\ge 0$. Given $g\in\mathcal{F}_0$, $f\in\mathcal{F}_1$, we let $[g,f]$ be the exponential Riordan array corresponding to those two functions. For example, the Pascal triangle is also an exponential Riordan array $P=[e^z,z]$ in the exponential Appell subgroup of the exponential Riordan group.

Given our combinatorial motivation for considering Riordan arrays, we are interested in arrays $(g,f)$ containing only integer entries whose inverses also contain only integer entries. Therefore, we will assume from now on that $g(0)=1$, and $f'(0)=\pm1$, so that the main diagonal of $(g,f)$ contains either all $1$'s or alternating $1$'s and $-1$'s starting from $1$. Most of the arrays we consider will have $f'(0)=1$.

We call an Riordan array $(g,f)$ an \emph{involution} if $(g,f)^2=(g\cdot(g\circ f), f\circ f)=(1,z)$. A Riordan array is a \emph{pseudo-involution} if $(g,-f)=(g,f)(1,-z)$ is an involution. Equivalently, $(g,f)$ is a pseudo-involution if and only if $(g,f)^{-1}=(1,-z)(g,f)(1,-z)=(g(-z),-f(-z))$. Yet another equivalent condition for $(g,f)$ to be a pseudo-involution is that
\[
g\circ(-f)=\frac{1}{g}, \qquad \overline{f}=(-z)\circ f\circ (-z).
\]
When $(g,f)$ (or $[g,f]$) is a pseudo-involution, we call the function $f$ \emph{pseudo-involutory}. In this situation, $f$ is uniquely determined by $g$, so we call $f$ the \emph{pseudo-involutory companion} of $g$.

When $D=(g,f)$ is a pseudo-involution, the matrix $D$ can be recovered from the first column (the coefficient sequence for $g$) along (finite) antidiagonals rather than (infinite) columns, using what is called the \emph{B-function} of $f$, also called the B-function of the Riordan array $(g,f)$ (and originally called the $\Delta$-function by Cheon et al.~\cite{CJKS}).

\begin{definition} \label{def:b-fun}
The \emph{B-function} $B_f=B_f(z)$ of a Riordan pseudo-involution $D=(g,f)$ (and of its pseudo-involutory function $f$) is defined by the equation
\begin{equation} \label{eq:b-fun}
f-z=(zB_f)\circ(zf)=zf\,B_f(zf).
\end{equation}
The coefficient sequence $(b_n)_{n\ge 0}$ of $B_n(z)=\sum_{n\ge 0}b_nz^n\in\mathbb{R}[[z]]$ is called the \emph{B-sequence} of $(g,f)$ (or of $f$). 
\end{definition}

Equivalently, the B-sequence $(b_n)_{n\ge 0}$ of a pseudo-involution $D=[d_{n,k}]_{n,k\ge 0}$ can be defined directly by the recurrence relation
\begin{equation} \label{eq:b-mat}
d_{n+1,k+1}=d_{n,k}+\sum_{j=0}^{\infty}{b_j d_{n-j,k+j+1}}, \quad n,k\ge 0.
\end{equation}

For exponential Riordan arrays $[g,f]$ that are pseudo-involutions, the B-function $B_f$ is the same as for $(g,f)$, but we think of it an exponential generating function rather than an ordinary generating function, so we associate with it a related but different coefficient sequence.
\begin{definition} \label{def:beta-fun}
The \emph{beta-sequence} $(\beta_n)_{n\ge 0}$ of an exponential Riordan array $[g,f]$ (and of its pseudo-involutory function $f$) is given by $\beta_n=(2n+1)!b_n$ for $n\ge 0$. Equivalently, 
\[
B_f(z)=\sum_{n\ge 0}\frac{\beta_n}{(2n+1)!}z^n=\frac{Y(\sqrt{z})}{\sqrt{z}},
\]
where $Y(z)$ is the (odd) exponential generating function of $(0,\beta_0,0,\beta_1,0,\beta_2,\dots,0,\beta_n,\dots)$.
Defined directly, the $\beta$-sequence $(\beta_n)_{n\ge 0}$ of $D=[g,f]$ satisfies the recurrence relation 
\begin{equation} \label{eq:beta-mat}
\alpha_{n+1,k+1}=\alpha_{n,k}+\sum_{j=0}^{\infty}{\binom{n-k}{2j+1}\beta_j \alpha_{n-j,k+j+1}}, \quad n\ge k\ge 0,
\end{equation}
where $\alpha_{n,k}=\frac{d_{n,k}}{\binom{n}{k}}$ if $n\ge k$ and $\alpha_{n,k}=0$ if $n<k$.
\end{definition}

\subsection{Organization of the paper} \label{subsec:org}

In our previous paper~\cite{BS}, we have developed straightforward yet widely applicable methods for finding a pseudo-involutory companion $f$ of a B-function $g=g(z)$ and the B-function $B_f$ of $f=f(z)$ when $g$ satisfies a functional equation of the form $g=1+z\gamma(g)$ or $g=e^{z\gamma(g)}$ for a generalized palindrome $\gamma$ (see Definition~\ref{def:darga}) such that $\gamma(1)\ne 0$. Other connections between pseudo-involutions and palindromes have also been explored by He and Shapiro~\cite{HSh2}.

In Section~\ref{sec:b-h}, we take a wider view and consider several functions associated with Riordan arrays in general. We define the pseudo-inverse $\widehat{f}=(-z)\circ\overline{f}\circ(-z)$ of an invertible function $f$, and the pseudo-inverse $\widehat{(g,f)}=(1,-z)(g,f)^{-1}(1,-z)$ of a Riordan array, and consider the connection between pseudo-inverses and pseudo-involutions. In particular, it is straightforward to see that, for any Riordan array $X$, the array $X\widehat{X}$ is a pseudo-involution. In Theorem~\ref{thm:g-f-root}, we prove the converse of this result: if $(g,f)$ is a pseudo-involution, then $(g,f)=X\widehat{X}$ for some Riordan array $X$. Moreover, we describe the class of all Riordan arrays $X$ satisfying $X\widehat{X}=(g,f)$ and show that a canonical representative $X_{(g,f)}$ of this class. Likewise, we find the class of all invertible functions $h$ such that $f=h\circ\widehat{h}$ and describe the close relationship between the canonical representative of $h_f$ of that class (that we call the \emph{pseudo-half} of $f$) and the B-function $B_f$ of $f$. Finally, we give an alternative description of the set of all functions $g$ with a given pseudo-involutory companion $f$.

In Section~\ref{sec:gamma}, we extend these results by considering functions $\gamma$ that are not necessarily generalized palindromes to find the relationship between $\gamma$ associated with $g$ and the pseudo-half $h_f$ of the pseudo-involutory companion $f$ of $g$.

In Section~\ref{sec:b-from-gamma}, we consider $g$ satisfying either $g=1+z\gamma(g)$ or $g=e^{z\gamma(g)}$, where $\gamma$ is not necessarily a generalized palindrome, and express $B_f$ of the pseudo-involutory companion $f$ of $g$ as a composition of two functions $\eta$ and $H$, both of which are related to $\gamma$.

In Section~\ref{sec:poly-gamma}, we solve the problem in Section~\ref{sec:b-from-gamma} explicitly when $\gamma(z)$ is a Laurent polynomial (i.e. $\gamma(z)$ is allowed to contain terms with negative powers of $z$), and show that, in this case, both $\eta$ and $H$ are linear combinations of polynomials closely related to Chebyshev polynomials. In particular, we explicitly find $B_f$ for any quadratic polynomial $\gamma$, which includes the functional relations satisfied by Catalan, Schr\"{o}der, and extended Motzkin generating functions.

Finally, in Section~\ref{sec:rat-g}, we consider pseudo-involutions $(g,f)$ where $g(z)$ is a rational function of $z$. In this case, we find $B_f$ as a solution $y=B_f(z)$ of a bivariate polynomial equation $H_1(z,y)=H_2(z,y)$ for certain functions $H_1$ and $H_2$ related to the numerator and denominator of $g$, respectively, and involving Chebyshev polynomials.

\section{Functions associated with a Riordan array} \label{sec:b-h}

The following definition will be useful in the rest of the paper. 

\begin{definition} \label{def:hat}
If $f$ is an invertible power series, let $\bar{f}$ be its compositional inverse, i.e. $f\circ\bar{f}=\bar{f}\circ f=id$. Define the \emph{pseudo-inverse} $\hat{f}$ of $f$ by $\hat{f}=(-z)\circ\bar{f}\circ(-z)=\overline{(-z)\circ f\circ (-z)}$. Similarly, we call $\widehat{(g,f)}=(1,-z)(g,f)^{-1}(1,-z)$ the \emph{pseudo-inverse} of a Riordan array $(g,f)$. 
\end{definition}

It is easy to see that $\hat{\hat{f}}=f$, $\hat{\bar{f}}=\bar{\hat{f}}=(-z)\circ f \circ (-z)$, and $\widehat{f_1\circ f_2}=\hat{f}_2\circ\hat{f}_1$ for any invertible functions $f,f_1,f_2$. Moreover, $f$ is pseudo-involutory if and only if $\hat{f}=f$. Likewise, $(g,f)$ is a pseudo-involution if and only if $\widehat{(g,f)}=(g,f)$.

Given a function $h\in\mathbb{R}[[z]]$, we say that $h\in\mathcal{F}_k$, where $k\ge 0$, if $[x^k]h\ne 0$ and $[x^j]h=0$ for $0\le j<k$. Then for any invertible Riordan array $(g,f)$, we have $g\in\mathcal{F}_0$ and $f\in\mathcal{F}_1$. 

\begin{theorem} \label{thm:f-pseudo-half}
For any pseudo-involutory function $f\ne-z$, we have
\begin{equation} \label{eq:f-pseudo-half}
f=\widehat{(\sqrt{zf})}\circ\sqrt{zf}.
\end{equation}
Moreover, $f=h\circ\widehat{h}$  for $h\in\mathcal{F}_1$ if an only if $h=\widehat{(\sqrt{zf})}\circ \phi$, where $\phi$ is an arbitrary odd function, i.e. $\phi(-z)=-\phi(z)$.
\end{theorem}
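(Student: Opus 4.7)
My plan has two parts matching the two assertions of the theorem. Throughout, let $s=\sqrt{zf}$, which is a well-defined element of $\mathcal{F}_1$ with $s'(0)=1$: the hypothesis $f\neq-z$ forces $f'(0)=1$ (since any pseudo-involution with $f'(0)=-1$ must be $-z$, as the relation $\overline{f}(z)=-f(-z)$ iterated on coefficients kills every higher term), so $zf\in\mathcal{F}_2$ with leading coefficient $1$ and admits a principal square root starting with $+z$.

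For the first identity, I would unwind the definition $\widehat{s}(z)=-\overline{s}(-z)$ to see that $\widehat{s}(s(z))=f(z)$ is equivalent to $s(-f(z))=-s(z)$. I'd verify this by squaring: on one side, $s(z)^2=zf(z)$, and on the other, $s(-f(z))^2=(-f(z))\cdot f(-f(z))$. The pseudo-involution relation $\overline{f}(z)=-f(-z)$ can be rewritten as $f(-w)=-\overline{f}(w)$, so substituting $w=f(z)$ yields $f(-f(z))=-z$. The two squares then agree, both equal to $zf(z)$, and since $s(-f(z))$ and $-s(z)$ both begin with $-z$, they are equal as formal power series. This proves \eqref{eq:f-pseudo-half}.

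For the characterization of all $h\in\mathcal{F}_1$ with $f=h\circ\widehat{h}$, I would lean on the multiplicativity $\widehat{f_1\circ f_2}=\widehat{f_2}\circ\widehat{f_1}$ recorded after Definition~\ref{def:hat}. For sufficiency, if $h=\widehat{s}\circ\phi$, then $\widehat{h}=\widehat{\phi}\circ\widehat{\widehat{s}}=\widehat{\phi}\circ s$, so $h\circ\widehat{h}=\widehat{s}\circ(\phi\circ\widehat{\phi})\circ s$. When $\phi$ is odd, the defining formula $\widehat{\phi}(z)=-\overline{\phi}(-z)$ collapses to $\widehat{\phi}=\overline{\phi}$ (inverses of odd functions being odd), so $\phi\circ\widehat{\phi}=\id$ and $h\circ\widehat{h}=\widehat{s}\circ s=f$ by Part~1. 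For necessity, given any $h$ with $h\circ\widehat{h}=f$, I'd set $\phi:=\overline{\widehat{s}}\circ h\in\mathcal{F}_1$, so tautologically $h=\widehat{s}\circ\phi$. Feeding this back in and canceling $\widehat{s}$ on the left and $s$ on the right against the identity $f=\widehat{s}\circ s$ yields $\phi\circ\widehat{\phi}=\id$, whence $\widehat{\phi}=\overline{\phi}$, which by the definition of $\widehat{\phi}$ is exactly the statement that $\overline{\phi}$ (and hence $\phi$) is odd.

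The main obstacle is the sign-check in Part~1 selecting the correct branch of the square root for $s(-f(z))$, together with keeping the hat and bar operations straight in Part~2, since $\widehat{\cdot}$ involves conjugation by $-z$ and reverses composition order. Neither presents a real difficulty once the bookkeeping already supplied in the paper is used.
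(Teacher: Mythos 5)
Your argument is correct and follows essentially the same route as the paper: part 1 rests on the identity $(\sqrt{zf}\circ(-f))^2=(-f)\cdot(f\circ(-f))=zf$ with the sign of the square root then pinned down (you do this by comparing leading coefficients, the paper by noting the $+$ sign would force $f=-z$), and part 2 shows two pseudo-halves of $f$ differ by precomposition with an odd invertible function, which is exactly the paper's computation specialized to $h_1=\widehat{(\sqrt{zf})}$. Your explicit justification that $f\ne -z$ forces $f'(0)=1$ is a small point the paper glosses over, but otherwise the proofs coincide.
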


\begin{proof}
Since $f\in\mathcal{F}_1$, it follows that $zf\in\mathcal{F}_2$, so $\pm\sqrt{zf}\in\mathcal{F}_1$ are invertible functions. Moreover, $(zf)\circ (-f) = (-f)\cdot(f\circ(-f))=(-f)\cdot(-z)=zf$. Therefore, $\sqrt{zf}\circ(-f)=\pm\sqrt{zf}$, and if $\sqrt{zf}\circ(-f)=\sqrt{zf}$, then $-f=\overline{\sqrt{zf}}\circ\sqrt{zf}=z$, i.e. $f=-z$, in which case $\sqrt{zf}\notin\mathbb{R}[[z]]$. Thus, if $f'(0)>0$ (and thus, $f'(0)=1$, since $f$ is pseudo-involutory), then $\sqrt{zf}\circ(-f)=-\sqrt{zf}$. In other words, $\sqrt{zf}\circ(-z)\circ f=(-z)\circ\sqrt{zf}$, or, equivalently,
\[
f=(-z)\circ\overline{\sqrt{zf}}\circ(-z)\circ\sqrt{zf}=\widehat{(\sqrt{zf})}\circ\sqrt{zf}.
\]
Now $h_1\circ\widehat{h}_1=h_2\circ\widehat{h}_2$ means $h_1\circ(-z)\circ\overline{h}_1\circ(-z)=h_2\circ(-z)\circ\overline{h}_2\circ(-z)$, which is equivalent to $\overline{h}_1\circ h_2\circ (-z)=(-z)\circ\overline{h}_1\circ h_2$. In other words, $\phi=\overline{h}_1\circ h_2$ is an odd function, i.e. $h_2=h_1\circ \phi$ for an arbitrary odd function $\phi$. Moreover, we know that $f=h\circ\widehat{h}$ for $h=\widehat{(\sqrt{zf})}$, so the second assertion of the theorem follows.
\end{proof}

Equivalently, we see that the involution $-f=\overline{\sqrt{zf}}\circ(-z)\circ\sqrt{zf}$, which implies that all involutions other than the identity are conjugate to $-z$. 

\begin{definition} \label{def:pseudo-half}
For a pseudo-involutory function $f$, denote $h_f:=\widehat{(\sqrt{zf})}$ and call $h_f$ the \emph{pseudo-half} of $f$.
\end{definition}

\begin{example} \label{ex:double-cat}
\emph{(Doubled Catalan sequence)}
Let $f=z(2C-1)$, where $C=1+zC^2=\frac{1-\sqrt{1-4z}}{2z}$ is the generating function for the Catalan sequence~\seqnum{A000108}.\footnote{Here and throughout the paper, the Axxxxxx number refers to the sequence number in the On-Line Encyclopedia of Integer Sequences (OEIS)~\cite{OEIS}.} Then we know (see, for example, \cite{BS}) that $f$ is pseudo-involutory. We can write $f=h_f\circ\widehat{h}_f$ for $h_f=\widehat{(\sqrt{zf})}=\widehat{(z\sqrt{2C-1})}$, but this is not the most convenient function $h$ such that $f=h\circ\widehat{h}$. It is much easier to write $f=z(2C-1)=z(C+zC^2)=zC+(zC)^2=(z+z^2)\circ(zC)= (z+z^2)\circ\overline{(z-z^2)}=(z+z^2)\circ\widehat{(z+z^2)}$, i.e. $f=h\circ\widehat{h}$ for $h=z+z^2$.

We can find the explicit formula for $h_f$ as follows. Recall that $zC=\overline{z-z^2}=\widehat{z+z^2}$. Then
\[
zf=z(2zC-z)=((z-z^2)(2z-(z-z^2))\circ(zC)=(z^2-z^4)\circ(zC),
\]
so $\sqrt{zf}=(z\sqrt{1-z^2})\circ(zC)$. Note that $z\sqrt{1-z^2}$ is an odd function of $z$, so $\widehat{(z\sqrt{1-z^2})}=\overline{z\sqrt{1-z^2}}=\overline{\sqrt{z}\circ(z-z^2)\circ z^2}=\sqrt{z}\circ(zC)\circ z^2=z\sqrt{C(z^2)}$, also an odd function. Thus,
\begin{equation} \label{eq:h-double-cat}
h_f=\widehat{(\sqrt{zf})}=\widehat{(z\sqrt{2C-1})}=\widehat{(zC)}\circ\widehat{(z\sqrt{1-z^2})}=(z+z^2)\circ\left(z\sqrt{C(z^2)}\right)=z\sqrt{C(z^2)}+z^2C(z^2).
\end{equation}
Moreover, we have $h=z+z^2=h_f\circ\widehat{(z\sqrt{C(z^2)})}=h_f\circ\overline{(z\sqrt{C(z^2)})}=h_f\circ(z\sqrt{1-z^2})$, and, as we noted above, $\phi(z)=z\sqrt{1-z^2}$ is an odd function.
\end{example}

For any function $h=h(z)\in\mathbb{R}[[z]]$ such that $h(0)=0$, we can write $h(z)=zh_o(z^2)+z^2h_e(z^2)$, where $h_o,h_e\in\mathbb{R}[[z]]$ are given by 
\begin{equation} \label{eq:def-he-ho}
h_o(z)=\frac{h(\sqrt{z})-h(-\sqrt{z})}{\sqrt{z}}, \qquad h_e(z)=\frac{h(\sqrt{z})+h(-\sqrt{z})}{z}.
\end{equation}
Then we have the following theorem.

\begin{theorem} \label{thm:b-half}
Let $f\ne-z$ be any pseudo-involutory function. Then 
\begin{equation} \label{eq:b-half}
B_f=2h_{f,e} \qquad \text{and} \qquad h_{f,o}=\sqrt{1+zh_{f,e}^2}.
\end{equation} 
Moreover, if $h$ is any invertible function such that $f=h\circ\widehat{h}$, then
\begin{equation} \label{eq:b-from-h}
zB_f=(2zh_e)\circ\overline{(zh_o^2-z^2h_e^2)}.
\end{equation}
\end{theorem}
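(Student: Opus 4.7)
\emph{Strategy.} The plan is to prove the ``moreover'' identity~\eqref{eq:b-from-h} first in full generality, and then derive~\eqref{eq:b-half} by specializing to $h = h_f$, where the data simplifies dramatically.

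\emph{Step 1: Proof of~\eqref{eq:b-from-h}.} I would set $u := \overline{h}(-z)$, so that $\widehat{h}(z) = -u$, $h(u) = -z$, and $h(-u) = h(\widehat{h}(z)) = f$. The even/odd split $h(z) = z h_o(z^2) + z^2 h_e(z^2)$ gives $h(\pm u) = \pm u h_o(u^2) + u^2 h_e(u^2)$, and therefore
\[
h(u) + h(-u) = 2 u^2 h_e(u^2), \qquad h(u)\cdot h(-u) = u^4 h_e(u^2)^2 - u^2 h_o(u^2)^2.
\]
Substituting $h(u) = -z$ and $h(-u) = f$ yields $f - z = 2 u^2 h_e(u^2)$ and $zf = u^2 h_o(u^2)^2 - u^4 h_e(u^2)^2$. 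Introducing $\psi(z) := z h_o(z)^2 - z^2 h_e(z)^2$ and $\xi := u^2$, these read $(2z h_e)(\xi) = f - z$ and $\psi(\xi) = zf$. Since $h \in \mathcal{F}_1$ forces $h_o(0) \neq 0$, we get $\psi \in \mathcal{F}_1$, so $\xi = \overline{\psi}(zf)$ and
\[
\bigl((2zh_e)\circ\overline{\psi}\bigr)(zf) \;=\; f - z \;=\; (zB_f)(zf).
\]
Because $zf \in \mathcal{F}_2$ has unit leading coefficient, ``compose on the right by $zf$'' is injective on $\mathbb{R}[[w]]$; cancelling gives~\eqref{eq:b-from-h}.

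\emph{Step 2: Reduction to~\eqref{eq:b-half}.} Specializing~\eqref{eq:b-from-h} to $h = h_f = \widehat{(\sqrt{zf})}$, Theorem~\ref{thm:f-pseudo-half} gives $\widehat{h_f} = \sqrt{zf}$, so $u = -\sqrt{zf}$ and $u^2 = zf$. Plugging this into $\psi_{h_f}(u^2) = zf$ forces $\psi_{h_f}(zf) = zf$, and hence $\psi_{h_f}(w) = w$; equivalently, $z h_{f,o}^2 - z^2 h_{f,e}^2 = z$, i.e.\ $h_{f,o} = \sqrt{1 + z h_{f,e}^2}$ (the positive root, since $h_{f,o}(0) = 1$). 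With $\overline{\psi_{h_f}} = \id$, equation~\eqref{eq:b-from-h} collapses to $z B_f = 2 z h_{f,e}$, giving $B_f = 2 h_{f,e}$.

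\emph{Anticipated obstacle.} The only inspired step is recognizing that $h(u)\cdot h(-u) = (-z)\cdot f = -zf$ is the product that naturally factors as a difference of squares through the parity split of $h$; once this is spotted, the rest is symbolic manipulation. The ``cancellation'' of $zf$ at the end is routine given that $zf$ has order exactly $2$ with unit leading coefficient, so the induction $c_0 = c_1 = \dots = 0$ goes through one even degree at a time.
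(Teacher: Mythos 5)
Your proof is correct and rests on exactly the same identities as the paper's: the parity split of $h$ evaluated at $\widehat{h}(z)$, which gives $f-z=h(u)+h(-u)=2u^{2}h_e(u^{2})$ and $zf=-h(u)h(-u)=\psi(u^{2})$ with $\psi=zh_o^{2}-z^{2}h_e^{2}$, followed by cancellation of the right factor $zf$. The only difference is organizational --- you prove the general formula \eqref{eq:b-from-h} first and obtain \eqref{eq:b-half} by specializing to $h=h_f$, where $u^{2}=zf$ forces $\psi=\id$, whereas the paper proves the special case directly (using $\widehat{h}_f=\sqrt{zf}$) and then repeats the computation for general $h$; both arguments are sound.
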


\begin{proof}
Recall that $\hat{h}_f=\sqrt{zf}=\sqrt{z}\circ(zf)$. Therefore,
\[
\begin{split}
(zB_f)\circ(zf)&=f-z=h_f\circ\hat{h}_f-\overline{\hat{h}}_f\circ\hat{h}_f=(h_f-\overline{\hat{h}}_f)\circ\hat{h}_f=(h_f(z)-(-h_f(-z)))\circ\hat{h}_f\\
&=(h_f(z)+h_f(-z))\circ\hat{h}_f=(2z^2h_{f,e}(z^2))\circ\sqrt{zf}=(2zh_{f,e})\circ z^2\circ\sqrt{zf}=(2zh_{f,e})\circ(zf),
\end{split}
\]
so $zB_f=2zh_{f,e}$, and thus, $B_f=2h_{f,e}$. Furthermore,
\[
\begin{split}
z^2\circ\hat{h}_f
=z^2\circ\sqrt{zf}=zf=(\overline{\hat{h}}_f\circ\hat{h}_f)(h_f\circ\hat{h}_f)=(\overline{\hat{h}}_f h_f)\circ\hat{h}_f
=\left(-h_f(-z)h_f(z)\right)\circ\hat{h}_f,
\end{split}
\]
and therefore,
\[
\begin{split}
z^2&=-h_f(-z)h_f(z)=(zh_{f,o}(z^2)-z^2h_{f,e}(z^2))(zh_{f,o}(z^2)+z^2h_{f,e}(z^2))\\
&=z^2h_{f,o}^2(z^2)-z^4h_{f,e}^2(z^2)=(zh_{f,o}^2-z^2h_{f,e}^2)\circ z^2,
\end{split}
\]
so
$zh_{f,o}^2-z^2h_{f,e}^2=z$, or, equivalently, $h_{f,o}=\sqrt{1+zh_{f,e}^2}$.

Finally, suppose that $f=h\circ\hat{h}$ for some invertible function $h$. Then the equation $f-z=(zB_f)\circ(zf)$ can be rewritten as
\[
h\circ\hat{h}-\overline{\hat{h}}\circ\hat{h}=(zB_f)\circ(\overline{\hat{h}}h)\circ\hat{h}=(zB_f)\circ(-h(-z)h(z))\circ\hat{h},
\]
and therefore,
\[
(zB_f)\circ(-h(-z)h(z))=h-\overline{\hat{h}}=h(z)-(-h(-z))=h(z)+h(-z).
\]
In other words, as with $h_f$ earlier in this proof, we have
\[
(2zh_e)\circ z^2=(zB_f)\circ(zh_o^2-z^2h_e^2)\circ z^2,
\]
so
\[
2zh_e=(zB_f)\circ(zh_o^2-z^2h_e^2),
\]
and thus,
\[
zB_f=(2zh_e)\circ\overline{(zh_o^2-z^2h_e^2)}. \qedhere
\]
\end{proof}

\begin{corollary} \label{cor:b-half}
Let $f$ be any pseudo-involutory function. Then
\[
h_f=z\sqrt{1+\frac{z^2B_f^2(z^2)}{4}}+\frac{z^2B_f(z^2)}{2}=z\cdot\left(\left(\sqrt{1+\frac{z^2}{4}}+\frac{z}{2}\right)\circ\left(zB_f(z^2)\right)\right).
\]
Equivalently, $h_f=zu(zB_f(z^2))$, where $u=u(z)$ is such that $u(0)=1$, $u'(0)=1$, and $u-\dfrac{1}{u}=z$.
\end{corollary}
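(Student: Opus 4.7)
The plan is to obtain Corollary~\ref{cor:b-half} by direct substitution into the formulas of Theorem~\ref{thm:b-half}, with no new analytic input required. Recall from the decomposition \eqref{eq:def-he-ho} that any $h\in\mathcal{F}_1$ with $h(0)=0$ satisfies $h(z)=z\,h_o(z^2)+z^2\,h_e(z^2)$, and Theorem~\ref{thm:b-half} tells us that for the pseudo-half $h_f$ we have $h_{f,e}=B_f/2$ and $h_{f,o}=\sqrt{1+zh_{f,e}^2}$.

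First I would write
\[
h_f(z)=z\,h_{f,o}(z^2)+z^2\,h_{f,e}(z^2),
\]
then replace $h_{f,e}(z^2)$ by $B_f(z^2)/2$ and $h_{f,o}(z^2)=\sqrt{1+z^2h_{f,e}(z^2)^2}$ by $\sqrt{1+z^2B_f(z^2)^2/4}$. This yields the first equality of the corollary immediately. Factoring a $z$ out of the sum gives
\[
h_f(z)=z\left(\sqrt{1+\tfrac{(zB_f(z^2))^2}{4}}+\tfrac{zB_f(z^2)}{2}\right),
\]
and recognizing the bracketed expression as $\bigl(\sqrt{1+z^2/4}+z/2\bigr)\circ(zB_f(z^2))$ produces the middle expression of the corollary.

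For the equivalent description, I would set $u(z):=\sqrt{1+z^2/4}+z/2$ and verify the three asserted properties. Clearly $u(0)=1$. The functional equation $u-1/u=z$ follows by rationalizing: multiplying numerator and denominator of $1/u$ by $\sqrt{1+z^2/4}-z/2$ gives $1/u=\sqrt{1+z^2/4}-z/2$, so $u-1/u=z$. (The derivative condition is a direct computation from the closed form.) Since $u$ is uniquely determined by $u(0)=1$ and the quadratic $u^2-zu-1=0$ coming from $u-1/u=z$, the two descriptions of $h_f$ coincide.

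There is no real obstacle here; the statement is a repackaging of Theorem~\ref{thm:b-half}. The only care needed is to keep straight the substitution $z\mapsto z^2$ when passing from $h_{f,e}$ and $h_{f,o}$ (as functions of their argument) to $h_{f,e}(z^2)$ and $h_{f,o}(z^2)$ (as they appear in the decomposition of $h_f$), and to check the composition rewriting by substitution.
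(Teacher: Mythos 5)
Your proof is correct and is exactly the computation the paper leaves implicit: the paper's own proof of this corollary is simply ``Immediate from Theorem~\ref{thm:b-half},'' and your substitution of $h_{f,e}=B_f/2$ and $h_{f,o}=\sqrt{1+zh_{f,e}^2}$ into the decomposition $h_f(z)=z\,h_{f,o}(z^2)+z^2\,h_{f,e}(z^2)$, followed by factoring out $z$, is the intended route. One small caveat on the part you defer to ``a direct computation'': for $u=\sqrt{1+z^2/4}+z/2$ one actually gets $u'(0)=1/2$ (consistent with differentiating $u-1/u=z$ at $z=0$, where $u(0)=1$ forces $2u'(0)=1$), so the condition $u'(0)=1$ in the statement appears to be a typo and is in any case superfluous, since $u(0)=1$ together with $u-1/u=z$ already determines $u$ uniquely.
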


\begin{proof}
Immediate from Theorem~\ref{thm:b-half}.
\end{proof}

\begin{example} \label{ex:double-cat-hfb}
\emph{(Doubled Catalan sequence)}
Let $f=z(2C-1)$, the pseudo-involutory function from Example~\ref{ex:double-cat}. Then it follows from \eqref{eq:h-double-cat} that $h_{f,e}=C$ and $h_{f,o}=\sqrt{C}=\sqrt{1+zC^2}=\sqrt{1+zh_{f,e}^2}$. Therefore, we obtain $B_f=2h_{f,e}=2C$, as in \cite[Example 15]{BS}. 

From Example~\ref{ex:double-cat}, we see that we can also write $f=h\circ\widehat{h}$ for $h=z+z^2$. In that case, $h_e=1$ and $h_o=1$, so $2zh_e=2z$ and $zh_o^2-z^2h_e^2=z-z^2$. Therefore, from \eqref{eq:b-from-h}, $zB_f=(2zh_e)\circ\overline{(zh_o^2-z^2h_e^2)}=(2z)\circ\overline{(z-z^2)}=(2z)\circ(zC)=2zC$, so $B_f=2C$, as expected.
\end{example}

Finally, we can find all functions $g$ such that $(g,f)$ is a pseudo-involution. An answer to this problem was previously given in \cite{CK}:
\[
g=\pm\exp(\Phi(z,-f))
\] 
for any skew-symmetric bivariate function $\Phi(x,y)$, i.e. one such that $\Phi(y,x)=-\Phi(x,y)$. Our answer in this paper, though equivalent, has a somewhat simpler form.

\begin{theorem} \label{thm:g-from-zf}
Let $f$ be pseudo-involutory. Then a Riordan array $(g,f)$ is a pseudo-involution if and only if $g=\pm \exp(\phi(\sqrt{zf}))$, where $\phi=\phi(z)\in\mathbb{R}[[z]]$ is an arbitrary odd function.
\end{theorem}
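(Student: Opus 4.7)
My plan is to reduce the pseudo-involution condition on $g$ to an ``odd function'' condition via the change of variable $s=\sqrt{zf}$, leveraging the decomposition of $-f$ established in the proof of Theorem~\ref{thm:f-pseudo-half}.

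\textbf{Step 1: Reformulate the condition.} Since $f$ is already pseudo-involutory, $(g,f)$ being a pseudo-involution is equivalent to $g\cdot(g\circ(-f))=1$. Writing $g=\pm e^{L}$ with $L\in\mathcal{F}_1\cup\{0\}$ (this requires $g(0)=\pm1$, which matches our standing assumption), this becomes
\[
L\circ(-f)=-L.
\]
The theorem then amounts to classifying all $L\in z\mathbb{R}[[z]]$ satisfying this anti-invariance.

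\textbf{Step 2: Use the conjugation of $-f$ by $s=\sqrt{zf}$.} From the proof of Theorem~\ref{thm:f-pseudo-half} (for $f\ne-z$), we have $s\circ(-f)=-s$, equivalently $-f=\bar s\circ(-z)\circ s$. Since $s\in\mathcal{F}_1$ is invertible, we may set $\phi:=L\circ\bar s\in z\mathbb{R}[[z]]$, so that $L=\phi\circ s$. Substituting and using $s\circ(-f)=-s$,
\[
L\circ(-f)=\phi\circ s\circ(-f)=\phi\circ(-z)\circ s,
\]
so the condition $L\circ(-f)=-L$ becomes $\phi(-z)\circ s=-\phi\circ s$, i.e.\ $\phi(-z)=-\phi(z)$ after composing with $\bar s$. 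Thus $L$ satisfies the anti-invariance if and only if $\phi$ is odd, giving $g=\pm\exp(\phi(\sqrt{zf}))$ for an arbitrary odd $\phi$.

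\textbf{Step 3: Converse direction.} For an odd $\phi$, define $g=\pm\exp(\phi(\sqrt{zf}))$. Since $\phi(0)=0$ and $s\in\mathcal{F}_1$, $g(0)=\pm1$ as required, and the calculation of Step~2 runs backwards: $\phi\circ s\circ(-f)=\phi\circ(-s)=-\phi\circ s$, so $(g\circ(-f))\cdot g=\exp(L\circ(-f)+L)=1$, confirming $(g,f)$ is a pseudo-involution.

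\textbf{Step 4: Edge case $f=-z$.} Here $\sqrt{zf}\notin\mathbb{R}[[z]]$, so the statement must be interpreted carefully. The condition $g\cdot(g\circ z)=g^2=1$ forces $g=\pm1$, which is also recovered by taking $\phi=0$ (the only ``odd function'' that makes sense), so the theorem holds in this degenerate case as well.

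The only real subtlety is the step $\phi=L\circ\bar s$: one must check $\phi\in\mathbb{R}[[z]]$, which is automatic because $s\in\mathcal{F}_1$ and $L\in z\mathbb{R}[[z]]$, so $L\circ\bar s$ is a well-defined formal power series. Everything else is a clean substitution, so I do not anticipate a significant obstacle beyond keeping track of the compositional identities.
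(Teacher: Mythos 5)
Your proof is correct and follows essentially the same route as the paper's: both hinge on the identity $\sqrt{zf}\circ(-f)=-\sqrt{zf}$ from Theorem~\ref{thm:f-pseudo-half} and convert $g\cdot(g\circ(-f))=1$ into the oddness of $\phi$ by conjugating with $s=\sqrt{zf}$ and taking logarithms (the paper substitutes first and then takes logs, you do the reverse, which is immaterial). Your explicit treatment of the edge case $f=-z$ is a small bonus the paper omits, but it does not change the substance of the argument.
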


\begin{proof}
Since $f(0)=0$ and $g(-f)=1/g$, it follows that $g(0)=1/g(0)$, and thus $g(0)=\pm 1$. Therefore, we may assume without loss of generality that $g(0)=1$ and show that $(g,f)$ is a pseudo-involution if and only if $g=\exp(\phi(\sqrt{zf}))$ and $\phi$ is odd.

Consider $G=G(z)$ such that $g=G\circ\sqrt{zf}$, i.e. $G=g\circ\overline{\sqrt{zf}}$. Then $g(-f)=G\circ\sqrt{zf}\circ(-f)=G\circ(-\sqrt{zf})=G(-z)\circ\sqrt{zf}$. Recall that, for a pseudo-involutory $f$, $(g,f)$ is a pseudo-involution if and only if $g\cdot g(-f)=1$. This condition is equivalent to
\[
1=(G\circ\sqrt{zf})\cdot(G(-z)\circ\sqrt{zf})=(G(z)G(-z))\circ\sqrt{zf},
\] 
or, in other words, $G(z)G(-z)=1$. Note that $\overline{\sqrt{zf}}$ is invertible, and thus $\overline{\sqrt{zf}}(0)=0$, so $G(0)=g(0)=1$, and therefore, $\log G$ is defined. Taking logarithms of both sides of $G(z)G(-z)=1$, we obtain $\log G(z)+\log G(-z)=0$, or, equivalently, $\log G(-z)=-\log G(z)$. Thus, $\log G=\phi$ for some odd function $\phi=\phi(z)$, i.e. $G=\exp(\phi)$. Moreover, $\phi(0)=\log G(0)=\log 1=0$, so $\phi\in\mathbb{R}[[z]]$. (This eliminates odd functions like $1/z$ that are not defined at $0$.) We can also check that, in this case, $G(z)G(-z)=1$ for any odd function $\phi\in\mathbb{R}[[z]]$. Therefore, $g\cdot g(-f)=1$ with $g(0)=1$ if and only if
\[
g=G\circ\sqrt{zf}=\exp(\phi(\sqrt{zf})),
\]
where $\phi\in\mathbb{R}[[z]]$ is an arbitrary odd function.
\end{proof}

Note that Theorem \ref{thm:g-from-zf} generalizes \cite[Corollary 2.4]{CK} that proved this result for $f=z$.

To see that the two representations of $G$ are equivalent, recall that $z=\overline{\sqrt{zf}}\circ\sqrt{zf}$ and $-f=\overline{\sqrt{zf}}\circ(-z)\circ\sqrt{zf}$, so
\[
\Phi(z,-f)=\Phi(\overline{\sqrt{zf}},\overline{\sqrt{zf}}(-z))\circ\sqrt{zf}, \qquad \Phi(-f,z)=\Phi(\overline{\sqrt{zf}}(-z),\overline{\sqrt{zf}})\circ\sqrt{zf},
\]
Then $\Phi(-f,z)=-\Phi(z,-f)$ implies that
\[
\Phi(\overline{\sqrt{zf}}(-z),\overline{\sqrt{zf}})=-\Phi(\overline{\sqrt{zf}},\overline{\sqrt{zf}}(-z)).
\]
Now let $\phi(z)=\Phi(\overline{\sqrt{zf}},\overline{\sqrt{zf}}(-z))$, to obtain $\phi(-z)=-\phi(z)$, as desired.

\bigskip

For a Riordan array $X=(G,F)$, let the \emph{pseudo-inverse} of $X$ be the Riordan array
\[
\hat{X}=(1,-z)X^{-1}(1,-z)=\left(\frac{1}{G}\circ\overline{F}\circ(-z),(-z)\circ\overline{F}\circ(-z)\right)
=\left(\frac{1}{G}\circ(-\hat{F}),\hat{F}\right).
\]
As for pseudo-involutory functions $f$, it is straightforward to see that $\widehat{XY}=\hat{Y}\hat{X}$, $\widehat{X^{-1}}=\hat{X}^{-1}=(1,-z)X(1,-z)$, and that $X$ is a pseudo-involution if and only if $\hat{X}=X$. Then Theorem \ref{thm:g-from-zf} also lets us reformulate and slightly strengthen \cite[Corollary 2.6]{CK} and \cite[Theorem 3.1]{CK} as follows. 
\begin{theorem} \label{thm:g-f-root}
A Riordan array $(g,f)$ is a pseudo-involution if and only if $(g,f)=X\hat{X}$, where $X$ is any Riordan array of the form $X=X_{(g,f)}\Psi$, where
\[
X_{(g,f)}=\left(\sqrt{g},\sqrt{zf}\right), \quad \Psi=\left(\psi_1(z^2),z\psi_2(z^2)\right)
\] 
for arbitrary $\psi_1,\psi_2\in\mathcal{F}_0$.
\end{theorem}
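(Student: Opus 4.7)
The plan has three steps, with the \emph{if} direction essentially free. For any Riordan array $X$, the identity $\widehat{AB}=\hat{B}\hat{A}$ (noted after Definition~\ref{def:hat}) gives $\widehat{X\hat{X}}=\hat{\hat{X}}\hat{X}=X\hat{X}$, so $X\hat{X}$ is a pseudo-involution regardless of $X$. In particular, every Riordan array of the form $X=X_{(g,f)}\Psi$ yields a pseudo-involution under $X\mapsto X\hat{X}$.

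For the \emph{only if} direction, I first verify that the canonical $X_{(g,f)}=(\sqrt{g},\sqrt{zf})$ itself satisfies $X_{(g,f)}\widehat{X_{(g,f)}}=(g,f)$. Setting $F=\sqrt{zf}$, Theorem~\ref{thm:f-pseudo-half} gives $\hat{F}\circ F=f$. Using the formula for the pseudo-inverse of a Riordan array displayed just before the theorem, I compute
\[
X_{(g,f)}\widehat{X_{(g,f)}}=(\sqrt{g},F)\left(\frac{1}{\sqrt{g}}\circ(-\hat{F}),\,\hat{F}\right)=\left(\frac{\sqrt{g}}{\sqrt{g}\circ(-f)},\,f\right).
\]
Since $g(0)=1$, the principal branch $\sqrt{g}\in\mathcal{F}_0$ is well-defined, and the pseudo-involution identity $g\circ(-f)=1/g$ forces $\sqrt{g}\circ(-f)=1/\sqrt{g}$ (both sides being $1$ at $0$). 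The first component then collapses to $g$, so indeed $(g,f)=X_{(g,f)}\widehat{X_{(g,f)}}$.

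It remains to classify every $X$ satisfying $X\hat{X}=(g,f)$. Given two such arrays $X_1,X_2$, define $\Psi=X_2^{-1}X_1$, so that $X_1=X_2\Psi$ and $\hat{X}_1=\hat{\Psi}\hat{X}_2$. The equation $X_2\Psi\hat{\Psi}\hat{X}_2=X_2\hat{X}_2$ reduces to $\Psi\hat{\Psi}=(1,z)$, equivalently $\hat{\Psi}=\Psi^{-1}$. Unwinding the definition of pseudo-inverse, this says $\Psi^{-1}$ — hence $\Psi$ itself — commutes with $(1,-z)$. Writing $\Psi=(\alpha,\beta)$ and computing $\Psi(1,-z)=(\alpha,-\beta)$ against $(1,-z)\Psi=(\alpha(-z),\beta(-z))$, commutation is exactly $\alpha(-z)=\alpha(z)$ and $\beta(-z)=-\beta(z)$, i.e.\ $\alpha$ even and $\beta$ odd. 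Combined with $\alpha\in\mathcal{F}_0$ and $\beta\in\mathcal{F}_1$, this is equivalent to $\alpha(z)=\psi_1(z^2)$ and $\beta(z)=z\psi_2(z^2)$ for some $\psi_1,\psi_2\in\mathcal{F}_0$. Specializing $X_2=X_{(g,f)}$ then gives the description of every $X$; conversely, any such $\Psi$ satisfies $\Psi\hat{\Psi}=(1,z)$ and hence produces a valid $X$.

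The only substantive ingredients are Theorem~\ref{thm:f-pseudo-half} (to identify $\hat{F}\circ F$ with $f$ in the verification above) and the elementary fact that the centralizer of $(1,-z)$ in the Riordan group is exactly the set of arrays of the form $(\psi_1(z^2),z\psi_2(z^2))$. Neither step presents a real obstacle, so the bulk of the argument is routine Riordan multiplication and pseudo-inverse bookkeeping.
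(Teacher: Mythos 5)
Your proposal is correct and follows essentially the same route as the paper: the free \emph{if} direction via $\widehat{X\hat{X}}=X\hat{X}$, the direct verification that $X_{(g,f)}\widehat{X_{(g,f)}}=(g,f)$ using Theorem~\ref{thm:f-pseudo-half} and $g\circ(-f)=1/g$, and the classification of the ambiguity $\Psi$ as the checkerboard arrays (the paper phrases the last step as $\Psi=\widehat{\Psi^{-1}}=(1,-z)\Psi(1,-z)$ rather than as the centralizer of $(1,-z)$, but these are the same condition).
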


In other words, $\Psi$ is an arbitrary array in the \emph{checkerboard subgroup} of the Riordan group.

\begin{proof}
Let us note first that for any Riordan array $X$, the array $X\hat{X}$ is a pseudo-involution. Indeed,
\[
\widehat{X\hat{X}}=\hat{\hat{X}}\hat{X}=X\hat{X},
\]
so $X\hat{X}$ is a pseudo-involution. Moreover, Theorem~\ref{thm:f-pseudo-half} and the fact that $g\circ(-f)=1/g$ imply that
\[
\begin{split}
X_{(g,f)}\widehat{X_{(g,f)}}&=\left(\sqrt{g},\sqrt{zf}\right)\left(\frac{1}{\sqrt{g}}\circ(-\widehat{\sqrt{zf}}),\widehat{\sqrt{zf}}\right)\\
&=\left(\sqrt{g}\cdot\left(\frac{1}{\sqrt{g}}\circ(-\widehat{\sqrt{zf}})\circ\sqrt{zf}\right),\widehat{\sqrt{zf}}\circ\sqrt{zf}\right)\\
&=\left(\sqrt{g}\cdot\left(\frac{1}{\sqrt{g}}\circ(-f)\right),f\right)=\left(\sqrt{g}\cdot\frac{1}{\sqrt{1/g}},f\right)=(g,f).
\end{split}
\]
Finally, suppose that for some invertible Riordan arrays $X$ and $Y$, we have $X\hat{X}=Y\hat{Y}$. Then
\[
\Psi=X^{-1}Y=\hat{X}\hat{Y}^{-1}=\hat{X}\widehat{Y^{-1}}=\widehat{Y^{-1}X}=\widehat{\Psi^{-1}}=(1,-z)\Psi(1,-z),
\]
so that if $\Psi=(G,F)$, then $(G,F)=(1,-z)(G,F)(1,-z)=(G(-z),-F(-z))$, and therefore, $G$ is even and $F$ is odd. Moreover, $\Psi$ is also invertible, so $G\in\mathcal{F}_0$ and $F\in\mathcal{F}_1$, and thus $\Psi=\left(\psi_1(z^2),z\psi_2(z^2)\right)$ for some $\psi_1,\psi_2\in\mathcal{F}_0$, and $Y=X\Psi$. We also see that there are no other restrictions on $\psi_1,\psi_2$, and thus Riordan arrays of the form $X=X_{(g,f)}\Psi$ for an arbitrary checkerboard array $\Psi$ are exactly those for which $(g,f)=X\hat{X}$.
\end{proof}

\section{The $\gamma$ function of a Riordan array} \label{sec:gamma}

Many the results in the rest of this paper stem from two simple observations given in \cite[Theorem 2]{BS}, one more often useful in the case of ordinary generating functions, and the other more often useful in the case of exponential generating functions. We restate both here for completeness after a few preliminary remarks.

\begin{definition} \label{def:darga}
For a function $\gamma=\gamma(z)$, we say that $\gamma(z)$ has \emph{darga} $d$ if 
\[
\frac{\gamma(z)}{\gamma\left(\frac{1}{z}\right)}=z^d\phi(z)
\]
for some $\phi(z)\in\mathbb{R}[[z]]$ with $\phi(0)\ne 0$. In this case, we write $d=\dar(\gamma)$. In particular, if $\phi(z)=1$, we call $\gamma$ a \emph{generalized palindrome} of darga $d$. 
\end{definition}

We denote by $\mathcal{P}_d$ the set of all generalized palindromes of darga $d$.

For example, when $\gamma(z)=\sum_{n=k}^{l}a_nz^n$ is a Laurent polynomial, that is, $k,l\in\mathbb{Z}$, $k\le l$, and $a_k,a_l\ne 0$, it is straightforward to check that $\dar(\gamma)=k+l$, the sum of the minimum and maximum degrees of $\gamma$.

Darga (stress on the second syllable) has properties similar to degree. They are listed in \cite{BS} for generalized palindromes, and we briefly recall them here. If $\dar(\gamma_1)=d_1$ and $\dar(\gamma_2)=d_2$, then $\dar(\gamma_1\gamma_2)=d_1+d_2$, $\dar(\gamma_1/\gamma_2)=d_1-d_2$, and $\dar(\gamma^r(z))=\dar(\gamma(z^r))=r\dar(\gamma)$. Moreover, if both $\gamma_1$ and $\gamma_2$ are generalized palindromes, and $\dar(\gamma_1)=\dar(\gamma_2)=d$, then $\dar(\gamma_1+\gamma_2)=d$ when $\gamma_1+\gamma_2\ne 0$. Finally, if $\dar(\gamma_2)=0$, then $\dar(\gamma_1\circ\gamma_2)=0$.

\begin{theorem}[\cite{BS}, Theorem 2] \label{thm:f-companion} ~\\[-\baselineskip]
\begin{enumerate}
\item Let $g=g(z)$ be a function that satisfies a functional relation 
\begin{equation} \label{eq:gamma-ogf}
g=1+z\gamma(g)=1+z\cdot(\gamma\circ g),
\end{equation}
for some function $\gamma=\gamma(z)$ such that $\gamma(1)\ne 0$. Then $(g,f)$ and $[g,f]$ are pseudo-involutions if and only if
\begin{equation} \label{eq:f-ogf}
f=z\frac{\gamma(g)}{g\gamma\left(\frac{1}{g}\right)}.
\end{equation}
Moreover, if $\gamma\in\mathcal{P}_d$, then $f=zg^{d-1}$, so $(g,f)$ and $[g,f]$ are in the $(d-1)$-Bell subgroup.
\item Let $g=g(z)$ be a function that satisfies a functional relation 
\begin{equation} \label{eq:gamma-egf}
g=e^{z\gamma(g)}=e^{z\cdot(\gamma\circ g)},
\end{equation}
for some function $\gamma=\gamma(z)$ such that $\gamma(1)\ne 0$. Then $(g,f)$ and $[g,f]$ are pseudo-involutions if and only if
\begin{equation} \label{eq:f-egf}
f=z\frac{\gamma(g)}{\gamma\left(\frac{1}{g}\right)}.
\end{equation}
Moreover, if $\gamma\in\mathcal{P}_d$, then $f=zg^d$, so $(g,f)$ and $[g,f]$ are in the $d$-Bell subgroup.
\end{enumerate}
\end{theorem}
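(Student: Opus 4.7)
The plan is to use the characterization established earlier in the paper that $(g,f)$ (equivalently $[g,f]$) is a pseudo-involution if and only if both $g\circ(-f)=1/g$ and $\overline{f}=(-z)\circ f\circ(-z)$ hold. I would derive the explicit formula for $f$ from the first condition and then show that the second condition follows automatically from the shape of that formula. Since both requirements only involve the algebraic relationship between the power series $g$ and $f$, the argument is identical in the ordinary and exponential cases.

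For part (1), I would substitute $z\mapsto -f$ in the defining relation $g=1+z\gamma(g)$ to obtain $g\circ(-f)=1-f\,\gamma(g\circ(-f))$. Imposing $g\circ(-f)=1/g$ and solving for $f$ (which is legal because $\gamma(1/g)$ has constant term $\gamma(1)\ne 0$ and is therefore a unit in $\mathcal{F}$) gives $f=(g-1)/(g\,\gamma(1/g))$, and applying $g-1=z\gamma(g)$ in the numerator produces the stated formula. Every step is reversible, so the formula is equivalent to the condition $g\circ(-f)=1/g$.

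Next I would verify the second condition in the equivalent form $f\circ(-f)=-z$. The key observation is that substituting $z\mapsto -f$ in the formula for $f$ itself and using $g\circ(-f)=1/g$ inverts the ratio $\gamma(g)/(g\,\gamma(1/g))$, so $f\circ(-f)=-z$ collapses out in a single line of algebra. The palindrome reduction is then routine: if $\gamma\in\mathcal{P}_d$, then $\gamma(z)/\gamma(1/z)=z^d$, hence $\gamma(g)/\gamma(1/g)=g^d$, and the formula becomes $f=zg^{d-1}$, placing $(g,f)$ in the $(d-1)$-Bell subgroup. Part (2) is strictly parallel: substituting $z\mapsto -f$ in $g=e^{z\gamma(g)}$ and taking logarithms of $1/g=\exp(-f\,\gamma(1/g))$ yields $z\gamma(g)=f\,\gamma(1/g)$, hence $f=z\gamma(g)/\gamma(1/g)$; the same self-substitution gives $f\circ(-f)=-z$, and the palindrome reduction now gives $f=zg^d$ without the extra factor of $g$ in the denominator.

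The conceptually delicate point---and the one place where the argument could go wrong---is the second pseudo-involution condition, which is a statement about $\overline{f}$ and hence hard to read off directly from an implicit formula for $f$. The trick that makes the proof clean is replacing this with the equivalent identity $f\circ(-f)=-z$, which the formula satisfies automatically once $g\circ(-f)=1/g$ has been imposed. Beyond that, everything is formal manipulation, and the only real care needed is in checking that the substitutions $\gamma(g)\mapsto\gamma(1/g)$ and so forth are valid in $\mathcal{F}$, which the hypothesis $\gamma(1)\ne 0$ guarantees.
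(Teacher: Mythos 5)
The paper itself gives no proof of this theorem: it is quoted from \cite[Theorem 2]{BS} ``for completeness,'' so there is nothing in this manuscript to compare your argument against line by line. On its own terms your proof is correct and follows the natural route: derive the formula for $f$ from $g\circ(-f)=1/g$ together with the defining relation for $g$, then verify $f\circ(-f)=-z$ by self-substitution (your computation $f\circ(-f)=-f\cdot\tfrac{g\gamma(1/g)}{\gamma(g)}=-z$ does collapse in one line, and the palindrome reduction is immediate). The one place where you are claiming more than you have shown is the sentence ``every step is reversible.'' In the ``if'' direction you must pass from the algebraic identity $1/g=1-f\,\gamma(1/g)$ (which the formula for $f$ gives you) to the compositional identity $g\circ(-f)=1/g$. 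That is not the reversal of a step: it requires observing that both $1/g$ and $g\circ(-f)$ are power series with constant term $1$ satisfying the same fixed-point equation $y=1-f\,\gamma(y)$, and that this equation has a unique such solution because $f\in\mathcal{F}_1$ forces $[z^n]y$ to be determined by lower-order coefficients (and similarly $y=e^{-f\gamma(y)}$ in the exponential case). This is a standard and easy lemma, but it should be stated, since it is exactly the content of the ``if'' direction for the first pseudo-involution condition; once it is in place, the rest of your argument, including the observation that the ordinary and exponential Riordan cases reduce to the identical pair of functional identities on $g$ and $f$, goes through as written.
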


The following theorem relates the $\gamma$ function defined in each case above to the associated pseudo-involutory function $f$ in an equivalent but different way.

\begin{theorem} \label{thm:pseudo-half-gamma} ~\\[-\baselineskip]
\begin{enumerate}
\item Let $g=g(z)$ be a function that satisfies the functional relation \eqref{eq:gamma-ogf}. Then $(g,f)$ and $[g,f]$ are pseudo-involutions if and only if
\begin{equation} \label{eq:f-ogf-gamma}
f=\frac{z}{\gamma(1-z)}\circ\frac{z}{1+z}\circ\widehat{\frac{z}{\gamma(1-z)}},
\end{equation}
or, equivalently, if and only if $f=h\circ\widehat{h}$ for
\begin{equation} \label{eq:h-ogf-gamma}
h=\frac{z}{\gamma(1-z)}\circ\frac{2z}{2+z}=\frac{2z}{(2+z)\gamma\left(\frac{2-z}{2+z}\right)}
=2z\circ \frac{z}{(1+z)\gamma\left(\frac{1-z}{1+z}\right)}\circ\frac{z}{2}.
\end{equation}
\item Let $g=g(z)$ be a function that satisfies the functional relation \eqref{eq:gamma-egf}. Then $(g,f)$ and $[g,f]$ are pseudo-involutions if and only if
\begin{equation} \label{eq:f-egf-gamma}
f=\frac{z}{\gamma(e^{-z})}\circ\widehat{\frac{z}{\gamma(e^{-z})}},
\end{equation}
or, equivalently, if and only if $f=h\circ\widehat{h}$ for
\begin{equation} \label{eq:h-egf-gamma}
h=\frac{z}{\gamma(e^{-z})}.
\end{equation}
\end{enumerate}
\end{theorem}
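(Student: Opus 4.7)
The strategy is to reduce both parts to Theorem~\ref{thm:f-companion} by direct computation of the relevant compositions. In each case, the key observation is that the compositional inverse of the outer function $h$ can be identified by translating its defining equation, via a natural substitution, into the functional equation satisfied by $g(-z)$.

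I would start with Part 2 since it is simpler. Let $h(z)=z/\gamma(e^{-z})$; since $\gamma(1)\neq 0$, we have $h\in\mathcal{F}_1$ and $h$ is invertible. Solving $z=w/\gamma(e^{-w})$ and substituting $v=e^{-w}$ converts the equation to $v=e^{-z\gamma(v)}$. But $g(-z)$ satisfies exactly this equation, since $g=e^{z\gamma(g)}$ yields $g(-z)=e^{-z\gamma(g(-z))}$; by uniqueness of the formal power series solution with constant term $1$, we conclude $v=g(-z)$. Hence $\overline{h}(z)=-\log g(-z)$ and $\widehat{h}(z)=-\overline{h}(-z)=\log g(z)$. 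Computing the composition,
\[
h\circ\widehat{h}(z)=\frac{\log g(z)}{\gamma(e^{-\log g(z)})}=\frac{z\gamma(g)}{\gamma(1/g)},
\]
using $\log g=z\gamma(g)$, matches the pseudo-involutory $f$ of Theorem~\ref{thm:f-companion}(2), simultaneously proving \eqref{eq:f-egf-gamma} and \eqref{eq:h-egf-gamma}.

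Part 1 follows the same template applied to $h_1(z)=z/\gamma(1-z)$. Solving $z=w/\gamma(1-w)$ and setting $v=1-w$ gives $v=1-z\gamma(v)$, the equation for $g(-z)$ when $g=1+z\gamma(g)$, so $\overline{h_1}(z)=1-g(-z)$ and $\widehat{h_1}(z)=g(z)-1$. Then $\frac{z}{1+z}\circ\widehat{h_1}=(g-1)/g$, and
\[
h_1\!\left(\frac{g-1}{g}\right)=\frac{(g-1)/g}{\gamma(1/g)}=\frac{z\gamma(g)}{g\,\gamma(1/g)},
\]
which matches Theorem~\ref{thm:f-companion}(1) and establishes \eqref{eq:f-ogf-gamma}. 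To pass to the equivalent $h\circ\widehat{h}$ form with $h=h_1\circ h_2$ and $h_2(z)=2z/(2+z)$ as in \eqref{eq:h-ogf-gamma}, I would check directly that $\overline{h_2}(z)=2z/(2-z)$, so $\widehat{h_2}=h_2$ and $h_2\circ h_2(z)=z/(1+z)$; then $h\circ\widehat{h}=h_1\circ h_2\circ\widehat{h_2}\circ\widehat{h_1}=h_1\circ\frac{z}{1+z}\circ\widehat{h_1}=f$.

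I do not anticipate a substantive obstacle. The only care required is in verifying that $\gamma(1-z)$ and $\gamma(e^{-z})$ make sense as elements of $\mathcal{F}_0$ even when $\gamma$ is a Laurent series (which follows from $\gamma(1)\neq 0$, since $(1-z)^{-1}$ and $e^z$ lie in $\mathcal{F}_0$), and in justifying that each functional equation for $v$ has a unique formal power series solution with the required constant term, so that the identification $v=g(-z)$ is legitimate.
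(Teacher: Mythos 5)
Your proof is correct and takes essentially the same route as the paper's: both reduce to Theorem~\ref{thm:f-companion}, identify $\widehat{h}$ with $g-1$ (resp.\ $\log g$) from the defining functional equation, and use the factorization $\frac{z}{1+z}=h_2\circ\widehat{h_2}$ with $h_2=\frac{2z}{2+z}$. The only cosmetic difference is that you compute $\overline{h}$ first via the substitution and a uniqueness argument, whereas the paper reads the same identity directly as $G=\widehat{h}$.
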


\begin{proof}
Suppose $g$ satisfies the functional relation \eqref{eq:gamma-ogf}, then $g(0)=1$. Let $G=G(z)=g-1$, then $G(0)=0$ and $G$ satisfies the equation $G=z\gamma(1+G)$, i.e. $z=\frac{G}{\gamma(1+G)}$, or, equivalently,
\[
G=\overline{\left(\frac{z}{\gamma(1+z)}\right)}=\overline{(-z)\circ\frac{z}{\gamma(1-z)}\circ(-z)}=\widehat{\frac{z}{\gamma(1-z)}}.
\]
Therefore, by Theorem~\ref{thm:f-companion}, we have
\[
f=\frac{z\gamma(g)}{g\gamma(1/g)}=\frac{G}{(1+G)\gamma\left(\frac{1}{1+G}\right)}=\frac{z}{\gamma(1-z)}\circ\frac{G}{1+G}
=\frac{z}{\gamma(1-z)}\circ\frac{z}{1+z}\circ G,
\]
which implies Equation \eqref{eq:f-ogf-gamma}. Combined with the fact that $\frac{z}{1+z}=\frac{z}{1+\frac{z}{2}}\circ\frac{z}{1+\frac{z}{2}}=\frac{z}{1+\frac{z}{2}}\circ\widehat{\frac{z}{1+\frac{z}{2}}}$, this yields Equation~\eqref{eq:h-ogf-gamma}.

Now suppose $g$ satisfies the functional relation \eqref{eq:gamma-egf}, then $g(0)=1$. Let $G=G(z)=\log g$, then $G(0)=0$ and $G$ satisfies the equation $G=z\gamma(e^G)$, i.e. $z=\frac{G}{\gamma(e^G)}$, or, equivalently,
\[
G=\overline{\left(\frac{z}{\gamma(e^z)}\right)}=\overline{(-z)\circ\frac{z}{\gamma(e^{-z})}\circ(-z)}=\widehat{\frac{z}{\gamma(e^{-z})}}.
\]
Therefore, by Theorem~\ref{thm:f-companion}, we have
\[
f=\frac{z\gamma(g)}{\gamma(1/g)}=\frac{G}{\gamma(e^{-G})}=\frac{z}{\gamma(e^{-z})}\circ\widehat{\frac{z}{\gamma(e^{-z})}},
\]
in other words, Equations~\eqref{eq:f-egf-gamma} and~\eqref{eq:h-egf-gamma}.
\end{proof}

\section{$B$-function from $\gamma$-function} \label{sec:b-from-gamma}

In our previous paper \cite{BS}, we mostly considered cases where the function $\gamma(z)$ was a \emph{generalized palindrome}, i.e. when $\gamma(z)/\gamma(1/z)=z^k$ for some $k\in\mathbb{Z}$. Here we consider a wider collection of functions $\gamma=\gamma(z)$, not necessarily generalized palindromes, such that $\gamma(1)\ne 0$ and $\gamma'(1)$ exists. In some cases, such as for Laurent polynomials $\gamma$, we give more explicit formulas for the pseudo-involutory companion $f=f(z)$ of $g$ and the $B$-function $B_f$ of the pseudo-involution $(g,f)$.

Then we have the following two theorems.

\begin{theorem} \label{thm:bf-ogf}
Let $g$ be a function that satisfies Equation \eqref{eq:gamma-ogf}, and let $f=z\frac{\gamma(g)}{g\gamma(1/g)}$. Then $B_f=H\circ\overline{\left(\frac{z}{\eta}\right)}$, where $H=H(z)$ and $\eta=\eta(z)$ are defined by
\[
\eta\left(\frac{(z-1)^2}{z}\right)=\gamma(z)\gamma\left(\frac{1}{z}\right), \qquad H\left(\frac{(z-1)^2}{z}\right)=\frac{\gamma(z)-z\gamma\left(\frac{1}{z}\right)}{z-1}.
\]
\end{theorem}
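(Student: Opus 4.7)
The plan is to verify the identity directly from Definition~\ref{def:b-fun}, i.e.\ to show that $f - z = (zB_f) \circ (zf)$ with $B_f = H\circ\overline{(z/\eta)}$, by expressing both $zf$ and the ratio $(f-z)/(zf)$ as compositions through a single auxiliary series
\[
w := \frac{(g-1)^2}{g}.
\]
This choice is natural because $w(z)$ is obtained by substituting $g(z)$ for $z$ in the ``invariant'' $(z-1)^2/z$ that appears in the defining equations of $\eta$ and $H$. Note that $w$ has order $2$ in $z$, since $g-1 = z\gamma(g)$ has order exactly $1$ thanks to $\gamma(1)\ne 0$.

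The first computation is $zf$: using $g = 1 + z\gamma(g)$ to solve for $z = (g-1)/\gamma(g)$ and substituting,
\[
zf = \frac{z^2\gamma(g)}{g\,\gamma(1/g)}
= \frac{(g-1)^2/g}{\gamma(g)\,\gamma(1/g)}
= \frac{w}{\eta(w)} = \Bigl(\frac{z}{\eta}\Bigr)\circ w,
\]
where the last step uses the definition of $\eta$ to identify $\gamma(g)\gamma(1/g)=\eta(w)$. The parallel computation for $(f-z)/(zf)$ gives, again using $z = (g-1)/\gamma(g)$,
\[
\frac{f-z}{zf} \;=\; \frac{\gamma(g) - g\,\gamma(1/g)}{g-1} \;=\; H(w),
\]
by the definition of $H$. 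Since $\eta(0) = \gamma(1)^2 \ne 0$, the series $z/\eta$ has order $1$ with a unit leading coefficient, hence a compositional inverse. So I can invert $zf = (z/\eta)\circ w$ to get $w = \overline{(z/\eta)}\circ(zf)$, and combining with $B_f(zf) = H(w)$ yields
\[
B_f\circ(zf) \;=\; H\circ\overline{(z/\eta)}\circ(zf),
\]
from which $B_f = H\circ\overline{(z/\eta)}$ follows (composition on the right by $zf$, a series of positive order, is injective on formal power series).

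The main subtlety is not the algebra above but verifying that $\eta$ and $H$ are well-defined power series in their argument. The required observations are that $(z-1)^2/z$ is invariant under the involution $z \leftrightarrow 1/z$ and identifies precisely the orbits of that involution, and that both $\gamma(z)\gamma(1/z)$ and $(\gamma(z)-z\gamma(1/z))/(z-1)$ are invariant under $z \leftrightarrow 1/z$ (the latter after a short symmetry check). One also needs to notice that the numerator $\gamma(z) - z\gamma(1/z)$ vanishes at $z=1$ (it equals $\gamma(1)-\gamma(1)=0$), so the division by $z-1$ produces a regular series, and $H$ extends to a genuine power series at $0$; its constant term is $2\gamma'(1)-\gamma(1)$, which is where the existence hypothesis $\gamma'(1)$ is used. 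Together with the order analysis ($w$ and $zf$ have order $2$, while $z/\eta$ has order $1$), these checks make every composition and inversion in the argument formally valid.
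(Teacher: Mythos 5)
Your proposal is correct and follows essentially the same route as the paper's proof: both express $zf=(z/\eta)\circ w$ and $(f-z)/(zf)=H(w)$ through the auxiliary series $w=(g-1)^2/g$, invert $z/\eta$ using $\eta(0)=\gamma(1)^2\ne 0$, and cancel the right-composition by $zf$. As a minor aside, your constant term $H(0)=2\gamma'(1)-\gamma(1)$ is the correct value (one can check it against the quadratic case $\gamma=a+bz+cz^2$, where $H(0)=-a+b+3c$), whereas the paper states $2\gamma(1)-\gamma'(1)$, an apparent typo that does not affect the argument.
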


\begin{proof}
Since $g=1+z\gamma(g)$, we have
\[
\frac{(g-1)^2}{g}=\frac{z^2\gamma(g)^2}{g}=z^2\frac{\gamma(g)}{g\gamma\left(\frac{1}{g}\right)}\gamma(g)\gamma\left(\frac{1}{g}\right)=(zf)\cdot\left(\left(\gamma(z)\gamma\left(\frac{1}{z}\right)\right)\circ g\right)
\]
The function $\gamma(z)\gamma(1/z)$ has darga $0$, since it is invariant under the substitution $z\mapsto \frac{1}{z}$. Therefore, $\gamma(z)\gamma(1/z)$ is a function of $z+\frac{1}{z}$, or, equivalently, of $z+\frac{1}{z}-2=\frac{(z-1)^2}{z}$. Hence, $\gamma(z)\gamma(1/z)=\eta\left(\frac{(z-1)^2}{z}\right)$ for some formal power series $\eta=\eta(z)$. Letting $z=1$, we obtain $\eta(0)=\gamma(1)^2\ne 0$, and thus the function $\frac{z}{\eta}$ has a compositional inverse as a formal power series. Therefore,
\[
\frac{(g-1)^2}{g} = (zf)\cdot \eta\left(\frac{(g-1)^2}{g}\right),
\]
or, equivalently,
\[
zf=\left(\frac{z}{\eta}\right)\circ\left(\frac{(g-1)^2}{g}\right),
\]
and thus,
\[
\frac{(g-1)^2}{g}=\overline{\left(\frac{z}{\eta}\right)}\circ(zf).
\]
On the other hand, we have 
\[
B_f(zf)=\frac{f-z}{zf}=\frac{\frac{f}{z}-1}{f}=\frac{\gamma(g)-g\gamma\left(\frac{1}{g}\right)}{z\gamma(g)}=\frac{\gamma(g)-g\gamma\left(\frac{1}{g}\right)}{g-1}=\frac{\gamma(z)-z\gamma\left(\frac{1}{z}\right)}{z-1}\circ g.
\]
Notice that $\frac{\gamma(z)-z\gamma\left(\frac{1}{z}\right)}{z-1}=\frac{\frac{1}{\sqrt{z}}\gamma(z)-\sqrt{z}\gamma\left(\frac{1}{z}\right)}{\sqrt{z}-\frac{1}{\sqrt{z}}}$ is also invariant under the substitution $z\mapsto\frac{1}{z}$ and, therefore, is a formal power series in $\frac{(z-1)^2}{z}$. Let $H=H(z)$ be defined by $H\left(\frac{(z-1)^2}{z}\right)=\frac{\gamma(z)-z\gamma(1/z)}{z-1}$. In particular, letting $z=1$, we see that the numerator of $H$ becomes $0$, so $1-z$ divides $\gamma(z)-z\gamma(1/z)$, and in fact,
\[
H(0)=\lim_{z\to 1}\frac{\gamma(z)-z\gamma\left(\frac{1}{z}\right)}{z-1}=\frac{d}{dz}\!\left.\left(\frac{\gamma(z)-z\gamma\left(\frac{1}{z}\right)}{z-1}\right)\right\vert_{z=1}=2\gamma(1)-\gamma'(1),
\]
so the power series expansion of $H(z)$ involves only nonnegative powers of $z$. Putting together all of the above, we obtain
\[
B_f(zf)=H\left(\frac{(g-1)^2}{g}\right) = H\circ \overline{\left(\frac{z}{\eta}\right)}\circ(zf),
\]
and thus
\[
B_f=H\circ\overline{\left(\frac{z}{\eta}\right)},
\]
as claimed.
\end{proof}

\begin{example} \label{ex:little-sch}
\emph{(Little Schr\"{o}der numbers)}
Let $r=r(z)$ and $s=s(z)$ be the generating functions for the large Schr\"{o}der numbers \seqnum{A006318} and the little Schr\"{o}der numbers \seqnum{A001003}, respectively. Then $r=1+z(r+r^2)=2s-1=1+2zrs=1/(1-2zs)$ and $s=(r+1)/2=1+zrs=1/(1-zr)=1+z(-s+2s^2)$.

Since $s=\dfrac{1}{1-zr}=\dfrac{1}{1-z}\circ(zr)$, and $\left(\dfrac{1}{1-z},\dfrac{z}{1-z}\right)$ is a pseudo-involution, it follows from \cite[Theorem 44]{BS} that the pseudo-involutory companion of $g=s$ is
\[
f=\widehat{(zr)}\circ\frac{z}{1-z}\circ(zr)=\frac{z(1+z)}{1-z}\circ\frac{zr}{1-zr}=\frac{z(1+z)}{1-z}\circ(zrs)
=\frac{zrs(1+zrs)}{1-zrs}=\frac{zrs^2}{2-s}.
\]

Let $g=s$ in Theorem~\ref{thm:bf-ogf}, then $\gamma(z)=-z+2z^2=z(-1+2z)$, so
\[
\eta\left(\frac{(z-1)^2}{z}\right)=\gamma(z)\gamma\left(\frac{1}{z}\right)=z(-1+2z)\cdot\frac{1}{z}\left(-1+\frac{2}{z}\right)=5-2z-\frac{2}{z}=1-2\frac{(z-1)^2}{z},
\]
so $\eta(z)=1-2z$, and therefore, $\overline{\left(\dfrac{z}{\eta}\right)}=\overline{\left(\dfrac{z}{1-2z}\right)}=\dfrac{z}{1+2z}$. Similarly,
\[
H\left(\frac{(z-1)^2}{z}\right)=\frac{\gamma(z)-z\gamma\left(\frac{1}{z}\right)}{z-1}=\frac{-z+2z^2+1-\frac{2}{z}}{z-1}=2z+1+\frac{2}{z}=5+2\frac{(z-1)^2}{z},
\]
so $H(z)=5+2z$, and thus
\[
B_f=H\circ\overline{\left(\frac{z}{\eta}\right)}=(5+2z)\circ\frac{z}{1+2z}=5+\frac{2z}{1+2z}=\frac{5+12z}{1+2z}.
\]
In other words, the B-sequence $(b_n)_{n\ge 0}$ of $f$ is given by $b_0=5$ and $b_n=(-1)^{n-1}2^n$ for $n\ge 1$.
\end{example}

\begin{theorem} \label{thm:bf-egf}
Let $g$ be a function that satisfies Equation \eqref{eq:gamma-egf}, and let $f=z\frac{\gamma(g)}{\gamma(1/g)}$. Then $B_f=E\circ\overline{\left(\frac{z}{\varepsilon}\right)}\circ\sqrt{z}$, where
\[
\varepsilon=\varepsilon(z)=\sqrt{\gamma(e^z)\gamma(e^{-z})}, \qquad E(z)=\frac{\gamma(e^z)-\gamma(e^{-z})}{z}.
\]
\end{theorem}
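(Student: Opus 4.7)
The plan is to run an argument parallel to the one just given for Theorem~\ref{thm:bf-ogf}, replacing the ordinary functional equation $g-1=z\gamma(g)$ by $\log g = z\gamma(g)$. In the OGF case the key symmetric quantity was $(g-1)^2/g$, invariant under $z\mapsto 1/z$ and leading to the function $\eta$ of $(z-1)^2/z$. In the EGF setting the correct symmetric quantity is simply $(\log g)^2$, since $\log(1/g)=-\log g$, and the role of the change of variable $z\mapsto (z-1)^2/z$ is played by $z\mapsto z^2$ after the substitution $z\mapsto e^z$.

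Concretely, first I would square $\log g = z\gamma(g)$ and substitute $f = z\gamma(g)/\gamma(1/g)$ to obtain
\[
(\log g)^2 = z^2\gamma(g)^2 = (zf)\cdot\gamma(g)\gamma(1/g).
\]
The function $w\mapsto \gamma(e^w)\gamma(e^{-w})$ is invariant under $w\mapsto -w$, hence is an even formal power series in $w$ with nonzero constant term $\gamma(1)^2$. Its positive square root is precisely $\varepsilon(w) = \sqrt{\gamma(e^w)\gamma(e^{-w})}$, so $\gamma(g)\gamma(1/g) = \varepsilon(\log g)^2$. Taking the positive square root of the displayed identity (both sides lie in $\mathcal{F}_2$ with matching positive leading coefficient) gives
\[
\sqrt{zf} = \frac{\log g}{\varepsilon(\log g)} = \left(\frac{z}{\varepsilon}\right)\circ\log g.
\]
Since $\varepsilon(0) = \gamma(1)\ne 0$, the function $z/\varepsilon$ lies in $\mathcal{F}_1$ and is compositionally invertible, so $\log g = \overline{(z/\varepsilon)}(\sqrt{zf})$.

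For $B_f$ itself, I would compute
\[
B_f(zf) = \frac{f-z}{zf} = \frac{1}{z} - \frac{1}{f} = \frac{\gamma(g) - \gamma(1/g)}{z\gamma(g)} = \frac{\gamma(e^{\log g}) - \gamma(e^{-\log g})}{\log g} = E(\log g),
\]
having used $z\gamma(g) = \log g$ and the definition of $E$. Substituting the expression for $\log g$ from the previous paragraph yields
\[
B_f(zf) = E\bigl(\overline{(z/\varepsilon)}(\sqrt{zf})\bigr) = \bigl(E\circ\overline{(z/\varepsilon)}\circ\sqrt{z}\bigr)(zf),
\]
and cancelling $zf$ on both sides gives the claimed formula $B_f = E\circ\overline{(z/\varepsilon)}\circ\sqrt{z}$.

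The only subtle point requiring care is the meaning of the square root: by itself, $\overline{(z/\varepsilon)}(\sqrt{z})$ is a series in $\sqrt{z}$ with half-integer powers of $z$, so the final composition would be ill-defined as a power series in $z$ unless $E$ is \emph{even} in its argument. It is, because $\gamma(e^z)-\gamma(e^{-z})$ is odd in $z$, so $E(z)$ is a power series in $z^2$ and the half-integer powers collapse on composition. In the same vein, the standing assumption that $\gamma'(1)$ exists is precisely what guarantees that $E(0) = 2\gamma'(1)$ is finite, and hence that $E$ is itself a well-defined element of $\mathbb{R}[[z]]$.
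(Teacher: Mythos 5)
Your proposal is correct and follows essentially the same route as the paper's proof: square the functional equation to get $(\log g)^2=(zf)\cdot\varepsilon(\log g)^2$, invert to express $\log g$ as $\overline{(z/\varepsilon)}\circ\sqrt{z}\circ(zf)$, and compute $B_f(zf)=E(\log g)$. Your closing observation that $E$ and $\overline{(z/\varepsilon)}$ have the right parity for the half-integer powers to cancel is a worthwhile point of rigor that the paper only partially addresses (in the remark following the theorem), but it does not change the argument.
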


\begin{proof}
Since $g=e^{z\gamma(g)}$, we have
\[
(\log g)^2=z^2\gamma(g)^2=z^2\frac{\gamma(g)}{\gamma(1/g)}\gamma(g)\gamma(1/g)=(zf)\cdot((\gamma(z)\gamma(1/z))\circ g)
\]
Let $\varepsilon=\varepsilon(z)=\sqrt{\gamma(e^z)\gamma(e^{-z})}$, then $\gamma(z)\gamma(1/z)=(\varepsilon(\log z))^2$, so
\[
(\log g)^2=(zf)\cdot(\varepsilon(\log g))^2,
\]
and, therefore,
\[
zf=\frac{(\log g)^2}{(\varepsilon(\log g))^2}=z^2\circ\frac{z}{\varepsilon}\circ(\log g).
\]
Note that $\varepsilon(0)=\gamma(1)^2\ne 0$, so $\frac{z}{\varepsilon}$ is invertible as a formal power series, and thus
\[
\log g = \overline{\left(\frac{z}{\varepsilon}\right)}\circ\sqrt{z}\circ (zf).
\]
On the other hand, we have 
\[
B_f(zf)=\frac{f-z}{zf}=\frac{\frac{f}{z}-1}{f}=\frac{\gamma(g)-\gamma\left(\frac{1}{g}\right)}{z\gamma(g)}=\frac{\gamma(g)-\gamma\left(\frac{1}{g}\right)}{\log g}=\frac{\gamma(e^z)-\gamma(e^{-z})}{z}\circ(\log g).
\]
Now letting $E=E(z)=\frac{\gamma(e^z)-\gamma(e^{-z})}{z}$ and combining the preceding equalities, we obtain
\[
B_f=E\circ\overline{\left(\frac{z}{\varepsilon}\right)}\circ\sqrt{z},
\]
as claimed. 
\end{proof}

Note that the power series expansion of $\gamma(e^z)-\gamma(e^{-z})$ has no constant term, and thus, the power series expansion of $E(z)$ has only nonnegative terms. In particular, \[
E(0)=\lim_{z\to 0}{\frac{\gamma(e^z)-\gamma(e^{-z})}{z}}=\left.\frac{d}{dz}(\gamma(e^z)-\gamma(e^{-z}))\right\vert_{z=0}=2\gamma'(1).
\]

\begin{example} \label{ex:labeled-trees}
\emph{(Rooted labeled trees)} Let $T=T(z)$ be the exponential generating functions for the class of labeled trees on vertices $\{0,1,\dots,n\}$, rooted at $0$ (see Examples 22 and 31 in~\cite{BS}). Then $T$ satisfies the equation $T=e^{zT}$, so $\gamma(z)=z$, palindromic of darga $1+1=2$, which implies that $f=zT^2$. To find $B_f$, we see that $\varepsilon=\sqrt{e^z\cdot e^{-z}}=1$, so $\overline{\left(\frac{z}{\varepsilon}\right)}=\overline{z}=z$ and $E=\frac{e^z-e^{-z}}{z}=2\frac{\sinh(z)}{z}$, and thus $B_f=\left(2\frac{\sinh(z)}{z}\right)\circ z\circ\sqrt{z}=2\frac{\sinh(\sqrt{z})}{\sqrt{z}}$, which agrees with~\cite[Example 31]{BS}.
\end{example}

\begin{example} \label{ex:labeled-trees-2colored}
\emph{(Rooted labeled trees with 2-colored leaves)} Let $S=S(z)$ be the exponential generating functions for the class of labeled trees on vertices $\{0,1,\dots,n\}$, rooted at $0$ with 2-colored trees, except the singleton tree is 1-colored (see Examples 23 and 32 in~\cite{BS}). Then $S$ satisfies the equation $S=e^{z(1+S)}$, so $\gamma(z)=1+z$, palindromic of darga $0+1=1$, which implies that $f=zS$. To find $B_f$, we see that $\varepsilon=\sqrt{(1+e^z)(1+e^{-z})}=\frac{1+e^z}{e^{z/2}}=2\cosh(z/2)$, so
\[
\overline{\left(\frac{z}{\varepsilon}\right)}=\overline{\left(\frac{z}{2\cosh(z/2)}\right)}=\overline{\left(\frac{z}{\cosh(z)}\circ\frac{z}{2}\right)}=2\overline{\left(\frac{z}{\cosh(z)}\right)}=2\overline{(z\sech(z))}
\] 
and 
\[
E=\frac{(1+e^z)-(1+e^{-z})}{z}=2\frac{\sinh(z)}{z},
\] 
and thus
\[
\begin{split}
B_f&=\left(2\frac{\sinh(z)}{z}\right)\circ\left(2\overline{(z\sech(z))}\right)\circ\sqrt{z}=\left(2\frac{\sinh(2z)}{2z}\right)\circ\overline{(z\sech(z))}\circ\sqrt{z}\\
&=\frac{2\sinh(z)\cosh(z)}{z}\circ\overline{(z\sech(z))}\circ\sqrt{z}=\frac{2\sinh(z)}{z\sech(z)}\circ\overline{(z\sech(z))}\circ\sqrt{z}\\
&=2\frac{\sinh(z)\circ\overline{(z\sech(z))}}{z}\circ\sqrt{z},
\end{split}
\]
which agrees with~\cite[Example 32]{BS}.
\end{example}

A substitution $g(z)\mapsto g(kz)$ yields another useful general result.

\begin{theorem} \label{thm:bf-k}
Let $(g,f)$ and $(G,F)$ be pseudo-involutions such that $G(z)=g(kz)$ for some constant $k\ne 0$. Then
\[
F(z)=\frac{1}{k}f(kz), \quad B_F(z)=k B_f(k^2 z).
\]
\end{theorem}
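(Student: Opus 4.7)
The plan is to establish the claim in two steps: first confirm the formula for $F$ by invoking uniqueness of the pseudo-involutory companion, and then derive the $B$-function identity by direct substitution into the defining relation \eqref{eq:b-fun}. The whole argument is really a scaling/change-of-variables calculation, and it will be cleaner if I introduce the dilation $\sigma_k(z)=kz$ so that $G=g\circ\sigma_k$ and the candidate for $F$ can be written as $F_0:=\sigma_k^{-1}\circ f\circ\sigma_k=\frac{1}{k}f(kz)$.

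To confirm that $F=F_0$, I would verify that $(G,F_0)$ is a pseudo-involution and then appeal to uniqueness. For the relation $G\circ(-F_0)=1/G$, substituting unpacks directly to $g(-f(kz))=1/g(kz)$, which is just $g\circ(-f)=1/g$ evaluated at $kz$. For $F_0$ itself being pseudo-involutory, the key observation is that $\sigma_k$ commutes with $(-z)$, so conjugation by $\sigma_k^{-1}$ respects the hat operation: $\widehat{F}_0=\sigma_k^{-1}\circ\hat{f}\circ\sigma_k=\sigma_k^{-1}\circ f\circ\sigma_k=F_0$. Since the pseudo-involutory companion of $G$ is unique, $F=F_0$.

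For the $B$-function identity, I would substitute $F(z)=\frac{1}{k}f(kz)$ into $F-z=zF\cdot B_F(zF)$. The left-hand side is $\frac{1}{k}\bigl(f(kz)-kz\bigr)$, and applying \eqref{eq:b-fun} for $f$ at the argument $kz$ rewrites this as $zf(kz)\,B_f\bigl(kz\cdot f(kz)\bigr)$. Setting $u:=zF(z)=\frac{z}{k}f(kz)$ yields the convenient identities $zf(kz)=ku$ and $kz\cdot f(kz)=k^2 u$, so the relation becomes $u\,B_F(u)=ku\,B_f(k^2u)$, from which $B_F(z)=k\,B_f(k^2z)$ follows.

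There is no genuine obstacle here; the only care needed is to treat $z\mapsto kz$ as a formal composition rather than a numerical evaluation when manipulating \eqref{eq:b-fun}, so that each occurrence of the composition symbol lines up with the correct argument. As a sanity check, the formulas collapse to identities when $k=1$, and one sees that $B_F\in\mathbb{R}[[z]]$ automatically because $k^2z\in\mathcal{F}_1$ and $B_f\in\mathcal{F}_0$.
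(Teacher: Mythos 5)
Your proof is correct and follows essentially the same route as the paper: verify that $\tfrac{1}{k}f(kz)$ satisfies $G\circ(-F)=1/G$, then feed the scaling identity $(zf)\circ(kz)=(k^2z)\circ(zF)$ into the defining relation $f-z=(zB_f)\circ(zf)$ to read off $B_F$. Your additional check that $\widehat{F_0}=F_0$ (so that uniqueness of the pseudo-involutory companion genuinely applies) is a small point the paper leaves implicit, but it does not change the argument.
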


\begin{proof}
Note that $kF(z)=f(kz)$ and $(zf)\circ(kz)=kzf(kz)=k^2 zF=(k^2z)\circ(zF)$. Then 
\[
G(-F(z))=g\circ(kz)\circ\left(-\frac{1}{k}f(kz)\right)=g\circ(-f(kz))=\frac{1}{g(kz)}=\frac{1}{G(z)},\\
\]
and
\[
\begin{split}
F-z&=\frac{1}{k}f(kz)-z=\frac{1}{k}(f-z)\circ(kz)=\frac{1}{k}(zB_f)\circ(zf)\circ(kz)\\
&=\frac{1}{k}(zB_f)\circ(k^2 z)\circ(zF)=\left(kz B_f(k^2 z)\right) \circ (zF). \qedhere
\end{split}
\]
\end{proof}

The last equation is also equivalent to $zB_F^2=k^2zB_f^2(k^2z)=(zB_f^2)\circ(k^2z)$.

\section{$B$-function given a Laurent polynomial $\gamma$} \label{sec:poly-gamma}

In this section, we will find the $B$-function for a Riordan pseudo-involution $(g,f)$ where $g$ satisfies Equation \eqref{eq:gamma-ogf} for some \emph{Laurent polynomial} $\gamma$ (with $\gamma(1)\ne 0$ unless $\gamma=0$). In other words, $\gamma(z)=\sum_{j\in\mathbb{Z}} c_j z^j$ for some constants $c_j$ ($j\in\mathbb{Z}$), only finitely many of which are nonzero, and $\sum_{j\in\mathbb{Z}} c_j\ne 0$ if $\gamma\ne 0$. Equivalently, either $\gamma(z)=0$ or $\gamma(z)=z^\ell \gamma_0(z)$ for some $\ell\in\mathbb{Z}$ and some polynomial $\gamma_0=\gamma_0(z)$ with $\gamma_0(0)\ne 0$, $\gamma_0(1)\ne 0$. When $\gamma=0$, we have $g=1$, $f=z$, $B_f=0$, so from now on we may assume that $\gamma(1)\ne 0$.

In what follows, we will make extensive use of two families of polynomials defined in \cite{BS} that are related to Chebyshev polynomials. We define and briefly recall their properties as given in Sections 4.1 and 4.3 of \cite{BS}.  Other connections between Chebyshev polynomials and Riordan involutions, different from those in this paper, were previously considered by Barry~\cite{B2}.

The first family, $(p_n(z))_{n\ge 0}$, is defined as follows. For each integer $l\ge 0$, let
\begin{equation} \label{eq:pk_cheb}
\begin{split}
p_{2l}(z) &= \left(U_l\left(\frac{z + 2}{2}\right) + U_{l-1}\left(\frac{z + 2}{2}\right)\right)^2,\\
p_{2l+1}(z) &= (z+4)\left(U_l\left(\frac{z + 2}{2}\right)\right)^2,
\end{split}
\end{equation}
where $U_l(z)$, defined by $U_l(\cos\theta)=\dfrac{\sin((l+1)\theta)}{\sin\theta}$, is the Chebyshev polynomial of the second kind. Also, define $p_{-1}(z)=0$, which agrees with \eqref{eq:pk_cheb} when $l=-1$. Then 
$
p_n(z)=\sum_{k=0}^{n}{d_{n,k}z^k},
$
where 
\[
d_{n,k}=\frac{n+1}{k+1}\binom{n+k+1}{2k+1}=\binom{n+k+2}{2k+2}+\binom{n+k+1}{2k+2},
\]
and 
\[
[d_{n,k}]_{n,k\ge 0}=\left(\frac{1+z}{(1-z)^3},\frac{z}{(1-z)^2}\right)
\]
is the Riordan array \seqnum{A156308}, whose first few rows are given by
\[
\begin{bmatrix}
1 & 0 & 0 & 0 & 0\\
4 & 1 & 0 & 0 & 0\\
9 & 6 & 1 & 0 & 0\\
16 & 20 & 8 & 1 & 0\\
25 & 50 & 35 & 10 & 1
\end{bmatrix}.
\]
Moreover, for any integer $n\ge 0$, and formal variables $u$ and $v$,
\begin{equation} \label{eq:p-dnk}
(zp_n(z))\circ\frac{(u-v)^2}{uv}=\frac{(u^{n+1}-v^{n+1})^2}{(uv)^{n+1}}.
\end{equation}

We define the second family, $(P_n(z))_{n\ge 0}$, by 
\begin{equation} \label{eq:p-def}
P_n(z)=U_n\left(\frac{z + 2}{2}\right) + U_{n-1}\left(\frac{z + 2}{2}\right).
\end{equation}
Then, for all $n\ge 0$,
\begin{equation} \label{eq:p-root}
P_n(z)=p_n(z)-p_{n-1}(z)=\sqrt{p_{2n}(z)}.
\end{equation}
Let
$
P_n(z)=\sum_{k=0}^{n}{a_{n,k}z^k},
$
then
\begin{equation} \label{eq:alj-def}
a_{n,k}=\frac{2n+1}{2k+1}\binom{n+k}{2k}=\binom{n+k+1}{2k+1}+\binom{n+k}{2k+1}, \quad n,k\ge 0,
\end{equation}
and
\[
[a_{n,k}]_{n,k\ge 0}=\left(\frac{1+z}{(1-z)^{2}},\frac{z}{(1-z)^{2}}\right)
\]
is the Riordan array \seqnum{A111125}, which begins with
\[
\begin{bmatrix}
1 & 0 & 0 & 0 & 0\\ 
3 & 1 & 0 & 0 & 0\\ 
5 & 5 & 1 & 0 & 0\\ 
7 & 14 & 7 & 1 & 0\\ 
9 & 30 & 27 & 9 & 1\\
\end{bmatrix}.
\]
Moreover, the array $[a_{n,k}(uv)^{n-k}]_{n,k\ge 0}$ satisfies the polynomial identity
\begin{equation} \label{eq:aij}
\left[a_{n,k}(uv)^{n-k}\right]_{n,k\ge 0}\left[(u-v)^{2k+1}\right]_{k\ge 0}^T=\left[u^{2n+1}-v^{2n+1}\right]_{l\ge 0}^T
\end{equation}
(where the superscript ${}^T$ denotes the transpose), or equivalently, for any $n\ge 0$,
\begin{equation} \label{eq:P-ank}
P_n\left(\frac{(u-v)^2}{uv}\right)=\frac{u^{2n+1}-v^{2n+1}}{(u-v)(uv)^n}.
\end{equation}

A simple verification shows that we can extend the range of the index $n$ in $p_n$ and $P_n$ to $\mathbb{Z}$ to agree with properties \eqref{eq:alj-def} and \eqref{eq:P-ank} by setting
\begin{equation} \label{eq:pp-neg}
p_{-1}(z)=0, \qquad p_{-n}(z)=p_{n-2}(z) \ \text{ for } n\ge 2, \qquad P_{-n}(z)=-P_{n-1}(z) \ \text{ for } n\ge 1.
\end{equation}
The second of the three definitions in \eqref{eq:pp-neg} can be more symmetrically restated as 
\[
p_{-n-1}(z)=p_{n-1}(z)\text{ for } n\ge 0.
\]
This also follows from the similar extension of $U_n(z)$ to all indices $n\in\mathbb{Z}$, since
\[
U_{-n-1}(\cos\theta)=\frac{\sin(-n\theta)}{\sin\theta}=-\frac{\sin(n\theta)}{\sin\theta}=-U_{n-1}(\cos\theta), \quad n\ge 0,
\]
so $U_{-n-1}(z)=-U_{n-1}(z)$ and $U_{-1}(z)=0$.

\bigskip

\begin{theorem} \label{thm:Laurent-poly}
Let $g$, $\gamma$, $f$, $\eta$, and $H$ be as in Theorem~\ref{thm:bf-ogf}, and suppose that $\gamma(z)=\sum_{j\in\mathbb{Z}} c_j z^j$ is a Laurent polynomial $\gamma(1)\ne 0$, i.e. only finitely many of the constants $c_j$ ($j\in\mathbb{Z}$) are nonzero and $\sum_{j\in\mathbb{Z}} c_j\ne 0$. Then
\begin{equation} \label{eq:eta-Laurent}
\eta(z)=\gamma(1)^2+z\sum_{n=1}^{\infty}\left(\sum_{j\in\mathbb{Z}}c_{j}c_{n+j}\right)p_{n-1}(z)
\end{equation}
and
\begin{equation} \label{eq:H-Laurent}
H(z)=\sum_{n\in\mathbb{Z}}c_nP_{n-1}(z).
\end{equation}
\end{theorem}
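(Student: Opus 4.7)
The plan is to apply the two identities (\ref{eq:p-dnk}) and (\ref{eq:P-ank}) under the substitution $u=\sqrt{z}$, $v=1/\sqrt{z}$, for which $(u-v)^2/(uv) = z - 2 + z^{-1} = (z-1)^2/z$. Setting $w = (z-1)^2/z$, these identities specialize to
\[
w\,p_{n-1}(w) = z^n + z^{-n} - 2 \qquad (n\ge 1)
\]
and
\[
P_{n-1}(w) = \frac{z^n - z^{1-n}}{z-1} \qquad (n\in\mathbb{Z}),
\]
where the $n\le 0$ range in the second identity is handled via the extension (\ref{eq:pp-neg}). Since $\eta$ and $H$ are uniquely determined by their defining relations as functions of $w$, the remaining task is simply to rewrite $\gamma(z)\gamma(1/z)$ and $(\gamma(z) - z\gamma(1/z))/(z-1)$ in terms of $w$ using the two identities above.

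For $\eta$, I will first expand $\gamma(z)\gamma(1/z) = \sum_{n\in\mathbb{Z}} a_n z^n$, where $a_n := \sum_{j\in\mathbb{Z}} c_j c_{n+j}$; because $\gamma$ is a Laurent polynomial, only finitely many $a_n$ are nonzero. The substitution $k = j+n$ in the sum defining $a_{-n}$ shows $a_{-n} = a_n$, so
\[
\gamma(z)\gamma(1/z) = a_0 + \sum_{n\ge 1} a_n (z^n + z^{-n}) = a_0 + \sum_{n\ge 1} a_n\bigl(2 + w\,p_{n-1}(w)\bigr).
\]
Collecting constant terms gives $a_0 + 2\sum_{n\ge 1} a_n = \sum_{n\in\mathbb{Z}} a_n = \bigl(\sum_j c_j\bigr)^2 = \gamma(1)^2$, which delivers (\ref{eq:eta-Laurent}).

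For $H$, a termwise substitution using the second identity yields
\[
\frac{\gamma(z) - z\gamma(1/z)}{z-1} = \sum_{n\in\mathbb{Z}} c_n \cdot \frac{z^n - z^{1-n}}{z-1} = \sum_{n\in\mathbb{Z}} c_n P_{n-1}(w),
\]
which is exactly (\ref{eq:H-Laurent}). The only mildly delicate step is justifying the formal substitution $u = \sqrt{z},\, v = 1/\sqrt{z}$ into identities that a priori involve $\sqrt{z}$; one checks that the fractional powers cancel on each right-hand side (the expression is manifestly invariant under $z \mapsto 1/z$ in the first identity, and the $\sqrt{z}$ in $u-v$ cancels the $\sqrt{z}$ appearing in $u^{2n+1} - v^{2n+1}$ in the second), so both sides lie in $\mathbb{R}[z, z^{-1}]$ and the manipulations are purely at the level of Laurent polynomials in $z$.
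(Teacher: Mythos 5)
Your proof is correct and follows essentially the same route as the paper: both expand $\gamma(z)\gamma(1/z)$ and $(\gamma(z)-z\gamma(1/z))/(z-1)$ termwise, specialize \eqref{eq:p-dnk} and \eqref{eq:P-ank} at $u=\sqrt{z}$, $v=1/\sqrt{z}$ (so that $z^n+z^{-n}-2=(zp_{n-1})\circ\frac{(z-1)^2}{z}$ and $P_{n-1}\bigl(\frac{(z-1)^2}{z}\bigr)=\frac{z^n-z^{1-n}}{z-1}$), and absorb the constants $2\sum_{n\ge1}a_n$ into $\gamma(1)^2$. Your explicit check that the half-integer powers cancel is a minor point the paper leaves implicit.
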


\begin{proof}
Since the product
\[
\gamma(z)\gamma\left(\frac{1}{z}\right)=\left(\sum_{j\in\mathbb{Z}} c_j z^j\right)\left(\sum_{j\in\mathbb{Z}} c_j z^{-j}\right),
\]
is invariant with respect to the substitution $z\mapsto 1/z$, it follows that for each $k\ge 1$, its coefficients at $z^k$ and $z^{-k}$ are equal. Therefore, the function $\eta=\eta(z)$ as defined in Theorem~\ref{thm:bf-ogf} satisfies
\[
\begin{split}
\eta\left(\frac{(z-1)^2}{z}\right)=\gamma(z)\gamma\left(\frac{1}{z}\right)&=\left(\sum_{j\in\mathbb{Z}}c_j^2\right)+\sum_{n=1}^{\infty}\left(\sum_{j\in\mathbb{Z}}c_{j}c_{n+j}\right)(z^n+z^{-n})\\
&=\left(\sum_{j\in\mathbb{Z}}c_j\right)^2+\sum_{n=1}^{\infty}\left(\sum_{j\in\mathbb{Z}}c_{j}c_{n+j}\right)(z^n+z^{-n}-2)\\
&=\gamma(1)^2+\sum_{n=1}^{\infty}\left(\sum_{j\in\mathbb{Z}}c_{j}c_{n+j}\right)(zp_{n-1})\circ\frac{(z-1)^2}{z},
\end{split}
\]
or equivalently,
\[
\eta(z)=\gamma(1)^2+z\sum_{n=1}^{\infty}\left(\sum_{j\in\mathbb{Z}}c_{j}c_{n+j}\right)p_{n-1}(z).
\]
Similarly, letting $u=\sqrt{z}$ and $v=\frac{1}{\sqrt{z}}$ in \eqref{eq:P-ank}, we obtain
\[
\begin{split}
H\left(\frac{(z-1)^2}{z}\right)&=\frac{\gamma(z)-z\gamma\left(\frac{1}{z}\right)}{z-1}=\frac{\frac{1}{\sqrt{z}}\gamma(z)-\sqrt{z}\gamma\left(\frac{1}{z}\right)}{\sqrt{z}-\frac{1}{\sqrt{z}}}\\
&=\sum_{n\in\mathbb{Z}}c_n\frac{z^{n-\frac{1}{2}}-z^{-n+\frac{1}{2}}}{\sqrt{z}-\frac{1}{\sqrt{z}}}
=\sum_{n\in\mathbb{Z}}c_nP_{n-1}\left(\left(\sqrt{z}-\frac{1}{\sqrt{z}}\right)^2\right)
=\sum_{n\in\mathbb{Z}}c_nP_{n-1}\left(\frac{(z-1)^2}{z}\right),
\end{split}
\]
so 
\[
H(z)=\sum_{n\in\mathbb{Z}}c_nP_{n-1}(z).
\]
Equivalently, if we define a linear functional $L$ on the space of Laurent series in a formal variable $P$ so that $L(P^n)=P_n$ for all $n\in\mathbb{Z}$, then
\[
H=L\left(\frac{\gamma(P)}{P}\right). \qedhere
\] 
\end{proof}

\begin{remark} \label{rem:gamma0}
If $\gamma(z)\ne0$, then we can write $\gamma(z)=z^\ell \gamma_0(z)$, where $\ell\in\mathbb{Z}$ is the minimum degree of $\gamma$, and $\gamma_0=\gamma_0(z)$ is a polynomial with $\gamma_0(0)\ne 0$. Notice that $\gamma(z)\gamma(1/z)=\gamma_0(z)\gamma_0(1/z)$, so we may let $c'_j=c_{j+\ell}$ for $j\ge 0$ and use $\gamma_0(z)=\sum_{j=0}^{\infty} c'_j z^j$ instead of $\gamma(z)$ to calculate $\eta(z)$ more efficiently. Indeed, the same calculations for $\gamma_0$ show that
\begin{equation} \label{eq:eta-gamma0}
\eta(z)=\gamma_0(1)^2+z\sum_{n=1}^{\infty}\left(\sum_{j\ge 0}c'_{j}c'_{n+j}\right)p_{n-1}(z).
\end{equation}
\end{remark}

\begin{example} \label{ex:little-sch-laurent} 
\emph{(Little Schr\"{o}der numbers)}
Let $g=s$ as in Example~\ref{ex:little-sch}, and let $f$ be the pseudo-involutory companion of $g$. Then we have $\gamma(z)=-z+2z^2=z(-1+2z)$, so $\gamma_0(z)=-1+2z$, and thus $\ell=1$ and the nonzero coefficients are $c'_0=c_1=-1$ and $c'_1=c_2=2$. Recall also that $p_0(z)=1$, $P_0(z)=1$, and $P_1(z)=3+z$. Therefore,
\[
\eta(z)=(c'_0+c'_1)^2+z(c'_0c'_1)p_0(z)=(-1+2)^2+z\cdot(-1)\cdot2\cdot1=1-2z
\]
and
\[
H(z)=c_1P_0(z)+c_2P_1(z)=(-1)\cdot1+2(3+z)=5+2z,
\]
as expected from Example~\ref{ex:little-sch}.
\end{example}

\begin{example} \label{ex:double-ext-motzkin-laurent}
\emph{(Extended doubled Motzkin numbers)}
Let $m=m(z)=1+zm+z^2m^2$ be the ordinary generating function for the Motzkin numbers \seqnum{A001006} and let $\tilde{m}=1+zm=1+z(1-\tilde{m}+\tilde{m}^2)$ be the generating function for the extended Motzkin sequence (\seqnum{A086246} without the initial $0$). From \cite[Example 49]{BS}, we have that pseudo-involutory companion of $g=2\tilde{m}-1=1+2zm$ (whose coefficient sequence is \seqnum{A007971} without the initial $0$) is
\[
f=\frac{z(2\tilde{m}-1)}{1+2z\tilde{m}}=\frac{z(2\tilde{m}-1)}{1+2\tilde{m}\frac{\tilde{m}-1}{1-\tilde{m}+\tilde{m}^2}}
=z(2\tilde{m}-1)\frac{1-\tilde{m}+\tilde{m}^2}{1-3\tilde{m}+3\tilde{m}^2}=\frac{(2\tilde{m}-1)(\tilde{m}-1)}{1-3\tilde{m}+3\tilde{m}^2}=\frac{1-3\tilde{m}+2\tilde{m}^2}{1-3\tilde{m}+3\tilde{m}^2},
\]
with the coefficient sequence \seqnum{A348189}. Since
\[
g=2\tilde{m}-1=1+2z(1-\tilde{m}+\tilde{m}^2)=1+2z\left(1-\frac{1+g}{2}+\left(\frac{1+g}{2}\right)^2\right)=1+z\frac{3+g^2}{2},
\]
it follows that $\gamma(z)=\dfrac{3+z^2}{2}$, so we only need $c_0=3/2$, $c_1=0$, and $c_2=1/2$. Therefore,
\[
\eta(z)=\gamma(1)^2+z((c_0c_1+c_1c_2)p_0(z)+(c_0c_2)p_1(z))=2^2+z\left(0\cdot1+\frac{3}{4}(4+z)\right)=4+3z+\frac{3}{4}z^2,
\]
so
\[
\frac{z}{\eta}=\frac{4z}{16+12z+3z^2}=\frac{z}{1+3z+3z^2}\circ\frac{z}{4}
=\frac{z}{1+3z}\circ\frac{z}{1+3z^2}\circ\frac{z}{4},
\]
and thus,
\[
\overline{\left(\frac{z}{\eta}\right)}=(4z)\circ(zC(3z^2))\circ\frac{z}{1-3z}.
\]
Similarly,
\[
H(z)=c_0P_{-1}(z)+c_1P_0(z)+c_2P_1(z)=\frac{3}{2}\cdot(-1)+0\cdot1+\frac{1}{2}(3+z)=\frac{z}{2},
\]
so
\[
B_f=H\circ\overline{\left(\frac{z}{\eta}\right)}
=(2z)\circ(zC(3z^2))\circ\frac{z}{1-3z}=\frac{2z}{1-3z}C\left(\frac{3z^2}{(1-3z)^2}\right)
=\frac{1-3z-\sqrt{1-6z-3z^2}}{3z},
\]
whose coefficient sequence is $2\cdot\text{\seqnum{A107264}}$ with $0$ prepended.
\end{example}

\begin{example} \label{ex:quad-laurent}
We can generalize the results of Examples~\ref{ex:little-sch}, \ref{ex:little-sch-laurent}, and~\ref{ex:double-ext-motzkin-laurent} to explicitly find $B_f$ for the pseudo-involutory companion $f$ of any function $g$ that satisfies the equation $g=1+z\gamma(g)$ for a quadratic polynomial $\gamma$ such that $\gamma(1)\ne 0$. Let $\gamma(z)=a+bz+cz^2$ such that $a+b+c\ne 0$. Then
\[
\begin{split}
\eta=\eta(z)&=\gamma(1)^2+z((ab+bc)p_0(z)+(ac)p_1(z))\\
&=(a+b+c)^2+z((ab+bc)\cdot 1+ac(4+z))\\
&=(a+b+c)^2+(ab+bc+4ac)z+acz^2,
\end{split}
\]
so
\[
\begin{split}
\frac{z}{\eta}&=\frac{z}{(a+b+c)^2+(ab+bc+4ac)z+acz^2}\\
&=\frac{z}{1+(ab+bc+4ac)z+ac(a+b+c)^2z^2}\circ\frac{z}{(a+b+c)^2}\\
&=\frac{z}{1+(ab+bc+4ac)z}\circ\frac{z}{1+ac(a+b+c)^2z^2}\circ\frac{z}{(a+b+c)^2},
\end{split}
\]
and thus,
\[
\begin{split}
\overline{\left(\frac{z}{\eta}\right)}&=((a+b+c)^2 z)\circ\left(zC\left(ac(a+b+c)^2z^2\right)\right)\circ\frac{z}{1-(ab+bc+4ac)z}\\
&=((a+b+c)z)\circ\left((a+b+c)zC\left(ac(a+b+c)^2z^2\right)\right)\circ\frac{z}{1-(ab+bc+4ac)z}\\
&=\left((a+b+c)zC\left(acz^2\right)\right)\circ\frac{(a+b+c)z}{1-(ab+bc+4ac)z}.
\end{split}
\]
Similarly,
\[
H(z)=aP_{-1}(z)+bP_0(z)+cP_1(z)=a\cdot(-1)+b\cdot1+c(3+z)=(-a+b+3c)+cz,
\]
so
\begin{equation} \label{eq:bf-quad-laurent}
\begin{split}
B_f=H\circ\overline{\left(\frac{z}{\eta}\right)}
&=((-a+b+3c)+cz)\circ\left((a+b+c)zC\left(acz^2\right)\right)\circ\frac{(a+b+c)z}{1-(ab+bc+4ac)z}\\
&=(-a+b+3c)+\left((a+b+c)czC\left(acz^2\right)\right)\circ\frac{(a+b+c)z}{1-(ab+bc+4ac)z}.
\end{split}
\end{equation}
Note that $C(0)=1$, so when $a=0$, we have
\[
B_f=(b+3c)+((b+c)cz)\circ\frac{(b+c)z}{1-bcz}=(b+3c)+\frac{(b+c)^2cz}{1-bcz}=\frac{(b+3c)-(b-c)cz}{1-bcz},
\]
as in Examples~\ref{ex:little-sch} and~\ref{ex:little-sch-laurent}. Likewise, when $c=0$ (that is, $\gamma(z)=a+bz$ is linear), Equation~\eqref{eq:bf-quad-laurent} reduces to $B_f=b-a$, since in this case $(g,f)=\left(\frac{1+az}{1-bz},\frac{z}{1-(b-a)z}\right)$, as in~\cite[Example 3]{BS}.

Note that when $a=0$, $b\ne 0$, $c\ne 0$, it follows that $B_f$ is a fractional linear transformation of the form $\frac{A-Cz}{1+Bz}$, previously considered by Barry~\cite{B1}.
\end{example}

Finally, consider what happens when $\gamma(1)=a+b+c=0$. In that case, either $c=0$ and $\gamma(z)=b(z-1)$, or $c\ne 0$ and $\gamma(z)=c(z-1)(z-\frac{a}{c})$. In the former case, we have $g=1$, $f=z$, and $B_f=0$, which agrees with Equation~\eqref{eq:bf-quad-laurent}. In the latter case, we have $g-1=cz(g-1)(g-\frac{a}{c})$, and thus, either $g=1$, $f=z$, and $B_f=0$, as before, or $g=\frac{a}{c}+\frac{1}{cz}$, which is not a power series in $z$.

\section{B-function given a rational function $g$} \label{sec:rat-g}

In this section, we consider somewhat of an inverse special case. Suppose $g=g(z)$ is a rational function, and $f$ is the pseudo-involutory companion of $g$. How can we find the B-function $B_f$ in this case?

Let us write $g(z)=\frac{p(z)}{q(z)}$ for some relatively prime polynomials $p$ and $q$ such that $p(0)=1$ and $q(0)=1$. Then the fact that $(g,f)$ is a pseudo-involution implies that $g(-f)=1/g$, that is $\frac{p(-f)}{q(-f)}=\frac{q(z)}{p(z)}$, or equivalently, $p(z)p(-f)-q(z)q(-f)=0$. Let $u$ and $v$ be two formal variables and consider the function
\[
R(u,v)=p(u)p(v)-q(u)q(v).
\]
$R$ is a polynomial symmetric in $u$ and $v$, so we can write $R(u,v)=S(u+v,uv)$ for some bivariate polynomial $S=S(x,y)$. Note that $S(0,0)=R(0,0)=0$. Then
\[
S(z-f,zf)=R(z,-f)=p(z)p(-f)-q(z)q(-f)=0.
\]
But $z-f=-(zB_f)\circ(zf)$, so we have
\[
S(-zB_f,z)\circ(zf)=S(z-f,zf)=0.
\]
Therefore, $zB_f(z)$ is a power series solution of $S(-x,z)=0$ in $x$.

\begin{example} \label{ex:bf-fib}
\emph{(Fibonacci numbers)}
As in~\cite[Example 50]{BS}, let $g=g(z)=\frac{1}{1-z-z^2}$ be the ordinary generating function for the Fibonacci sequence (starting at $1,1$), and let $f=f(z)$ be its pseudo-involutory companion. Since $g=\frac{1}{1-z}\circ(z+z^2)$, and $\frac{z}{1-z}$ is the  pseudo-involutory companion of $\frac{1}{1-z}$, we have
\[
f=\widehat{(z+z^2)}\circ\frac{z}{1-z}\circ(z+z^2)=(zC)\circ\frac{z+z^2}{1-z-z^2}=(zC)\circ(g-1)=\frac{1}{2}\left(1-\sqrt{\frac{1-5z-5z^2}{1-z-z^2}}\right).
\]
The equation $g(-f)=1/g$ is equivalent to
\[
\frac{1}{1+f-f^2}=1-z-z^2,
\]
i.e.
\[
(1-z-z^2)(1+f-f^2)=1,
\]
or
\[
f-z-f^2-zf-z^2+zf^2-z^2f+z^2f^2=0,
\]
which can be rewritten as
\[
(f-z)-(f-z)^2-3zf+zf(f-z)+(zf)^2=0.
\]
Since $f-z=(zB_f)\circ(zf)$, it follows that
\[
zB_f-(zB_f)^2-3z+z(zB_f)+z^2=0,
\]
or equivalently,
\[
zB_f^2-(1+z)B_f+(3-z)=0.
\]
Since $B_f$ is the power series solution of this equation, it follows that
\[
B_f=\frac{1+z-\sqrt{(1+z)^2-4z(3-z)}}{2z}=\frac{1+z-\sqrt{1-10z+5z^2}}{2z},
\]
which was conjectured and verified in~\cite[Example 50]{BS} by substituting $f$ and $B_f$ into $f-z=(zB_f)\circ(zf)$.
\end{example}

To generalize this example, we define a third family of polynomials, $(Q_n(z))_{n\ge 0}$, by 
\begin{equation} \label{eq:q-def}
Q_0(z)=1, \qquad Q_n(z)=2T_n\left(\frac{z + 2}{2}\right), \quad n\ge 1,
\end{equation}
where $T_n(z)$ is the Chebyshev polynomial of the first kind, defined by $T_n(\cos\theta)=\cos(n\theta)$. Then
\begin{equation} \label{eq:p-q}
Q_n(z)=P_n(z)-P_{n-1}(z) \qquad \text{for } n\ge 1,
\end{equation}
where $P_n(z)$ is as defined in \eqref{eq:p-def}. Let $Q_n(z)=\sum_{k=0}^{n}{b_{n,k}z^k}$,
then 
\begin{equation} \label{eq:bij-def}
b_{n,k}=\binom{n+k}{2k}+\binom{n+k-1}{2k}, \quad n,k\ge 0,
\end{equation}
and
\[
[b_{n,k}]_{n,k\ge 0}=\left(\frac{1+z}{1-z},\frac{z}{(1-z)^{2}}\right)
\]
is unsigned version of the Riordan array \seqnum{A110162}, which begins with
\[
\begin{bmatrix}
1 & 0 & 0 & 0 & 0\\ 
2 & 1 & 0 & 0 & 0\\ 
2 & 4 & 1 & 0 & 0\\ 
2 & 9 & 6 & 1 & 0\\ 
2 & 16 & 20 & 8 & 1\\
\end{bmatrix}.
\]
Moreover, the array $[b_{n,k}(uv)^{n-k}]_{n,k\ge 0}$ satisfies the polynomial identity
\begin{equation} \label{eq:bij}
\left[b_{n,k}(uv)^{n-k}\right]_{n,k\ge 0}\left[(u-v)^{2k}\right]_{k\ge 0}^T=\left[u^{2n}+v^{2n} - [n=0]\right]_{l\ge 0}^T
\end{equation}
(where the superscript ${}^T$ denotes the transpose, and $[n=0]$ is the Iverson bracket), or equivalently,
\begin{equation} \label{eq:Q-bnk}
Q_0(z)=1, \qquad Q_n\!\left(\frac{(u-v)^2}{uv}\right)=\frac{u^{2n}+v^{2n}}{(uv)^n} \quad \text{for } n\ge 1.
\end{equation}

Since both products in $p(z)p(-f)-q(z)q(-f)$ have similar form, it is enough to understand how to find a polynomial $H_1(z,y)$ such that $H_1(z,B_f)\circ(zf)=p(z)p(-f)$. Then we can similarly find $H_2(z,y)$ such that $H_2(z,B_f)\circ(zf)=q(z)q(-f)$ and then find the power series solution $y=B_f(z)$ for $H_1(z,y)=H_2(z,y)$. 

Suppose that $p(z)=\sum_{n\ge 0} c_nz^n$, where $c_0=1$ and only finitely many coefficients $c_n$ are nonzero. Then
\[
\begin{split}
&p(z)p(-f)=\left(\sum_{n\ge 0} c_nz^n\right)\left(\sum_{n\ge 0} c_n(-f)^n\right)\\
&=\sum_{n\ge 0}c_n(-zf)^n\left(c_n+\sum_{k\ge 1}c_{n+k}\left(z^k+(-f)^k\right)\right)\\
&=\sum_{n\ge 0}c_n(-zf)^n\left(c_n+\sum_{k\ge 1}c_{n+2k}\left(z^{2k}+f^{2k}\right)+\sum_{k\ge 0}c_{n+2k+1}\left(z^{2k+1}-f^{2k+1}\right)\right)\\
&=\sum_{n\ge 0}c_n(-zf)^n\left(\sum_{k\ge 0}c_{n+2k}(zf)^kQ_k\left(\frac{(f-z)^2}{zf}\right)+\sum_{k\ge 0}c_{n+2k+1}(zf)^k(z-f)P_k\left(\frac{(f-z)^2}{zf}\right)\right)\\
&=\sum_{n\ge 0}c_n(-z)^n\left(\sum_{k\ge 0}c_{n+2k}z^kQ_k(zB_f^2)-(zB_f)\sum_{k\ge 0}c_{n+2k+1}z^kP_k(zB_f^2)\right)\circ(zf)\\
&=\left(\sum_{n,k\ge 0}(-1)^nc_nz^{n+k}\left(c_{n+2k}Q_k(zB_f^2)-c_{n+2k+1}(zB_f)P_k(zB_f^2)\right)\right)\circ(zf),
\end{split}
\]
so
\begin{equation} \label{eq:rat-p-cheb}
H_1(z,B_f)=\sum_{n,k\ge 0}(-1)^nc_nz^{n+k}\left(c_{n+2k}Q_k(zB_f^2)-c_{n+2k+1}(zB_f)P_k(zB_f^2)\right),
\end{equation}
where we use the facts that $f-z=(zB_f)\circ(zf)$ and $\frac{(f-z)^2}{zf}=(zB_f^2)\circ(zf)$.

To make this formula more compact, combine the polynomial sequences $(Q_n)_{n\ge 0}$ and $(P_n)_{n\ge 0}$ into a single family $(R_n)_{n\ge 0}$ defined by
\begin{equation} \label{eq:r-def}
R_{2n}(z)=Q_n(z^2), \qquad R_{2n+1}(z)=zP_n(z^2), \qquad n\ge 0.
\end{equation}
Let $R_n(z)=\sum_{n\ge 0}\alpha_{n,k}z^k$, then $R_0(z)=1$ and
\begin{equation} \label{eq:r-uv}
R_n\left(\frac{u^2-v^2}{uv}\right)=\frac{(u^2)^n+(-v^2)^n}{(uv)^n}, \qquad n\ge 1.
\end{equation}
Moreover,
\begin{equation} \label{eq:alphaij-def}
\alpha_{n,k}=\binom{\frac{n+k}{2}}{k}+\binom{\frac{n+k}{2}-1}{k}, \qquad n,k\ge 0,
\end{equation}
where we set $\binom{a}{b}=0$ when $a\notin\mathbb{Z}_{\ge 0}$,  so
\[
[\alpha_{n,k}]_{n,k\ge 0}=\left(\frac{1+z^2}{1-z^2}\ , \frac{z}{1-z^2}\right),
\]
the Riordan array given by the unsigned version of \seqnum{A108045}, or equivalently, \seqnum{A114525} where the initial $2$ is replaced with $1$.
Then
\[
\begin{split}
&p(z)p(-f)=\left(\sum_{n\ge 0} c_nz^n\right)\left(\sum_{n\ge 0} c_n(-f)^n\right)\\
&=\sum_{n\ge 0}c_n(-zf)^n\left(c_n+\sum_{k\ge 1}c_{n+k}\left(z^k+(-f)^k\right)\right)\\
&=\sum_{n,k\ge 0}c_n c_{n+k}(-zf)^n(zf)^{k/2}R_k\left(\frac{z-f}{\sqrt{zf}}\right)\\
&=\left(\sum_{n,k\ge 0}c_nc_{n+k}(-z)^n z^{k/2}R_k\left(-\sqrt{z}B_f\right)\right)\circ(zf),
\end{split}
\]
and thus \eqref{eq:rat-p-cheb} simplifies to
\begin{equation} \label{eq:rat-r-cheb}
H_1(z,B_f)=\sum_{n,k\ge 0}(-1)^n c_n c_{n+k}z^{n+\frac{k}{2}}R_k\left(-\sqrt{z}B_f\right).
\end{equation}

We can collect all of the above into the following theorem.
\begin{theorem} \label{thm:poly-cheb}
Let $g=g(z)=p(z)/q(z)$ for some polynomials $p(z)$ and $q(z)$ such that $p(0)=1$ and $q(0)=1$. Let $f$ be the pseudo-involutory companion of $g$. Suppose that $p(z)=\sum_{n\ge 0}c_nz^n$ and $p(z)=\sum_{n\ge 0}c_nz^n$ and $q(z)=\sum_{n\ge 0}d_nz^n$, where only finitely many of $c_n$'s and $d_n$'s are nonzero. Then the B-function $B_f$ of $f$ is a power series solution of the bivariate polynomial equation
\begin{equation} \label{eq:poly-cheb}
\sum_{n,k\ge 0}(-1)^n (c_n c_{n+k}-d_n d_{n+k})z^{n+\frac{k}{2}}R_k\left(-\sqrt{z}B_f\right)=0,
\end{equation}
where $R_k(z)$, $k\ge 0$, are the polynomials defined by any one of the equations \eqref{eq:r-def}, \eqref{eq:r-uv}, or \eqref{eq:alphaij-def}.
\end{theorem}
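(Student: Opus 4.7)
The plan is to convert the pseudo-involution identity $g(-f) = 1/g$, written as $p(z)p(-f) = q(z)q(-f)$, into two parallel factorizations $p(z)p(-f) = H_1(z, B_f) \circ (zf)$ and $q(z)q(-f) = H_2(z, B_f) \circ (zf)$, and then to invoke injectivity of composition with $zf$ to equate $H_1(z, B_f) = H_2(z, B_f)$.

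The derivation of the factorization for $p(z)p(-f)$ is essentially the one already carried out in the paragraph preceding equation \eqref{eq:rat-r-cheb}. First expand $p(z)p(-f) = \sum_{n,m \ge 0} c_n c_m z^n (-f)^m$ and pair the ordered indices $(n, n+k)$ and $(n+k, n)$ for $k \ge 1$ into $c_n c_{n+k}(-zf)^n\bigl[z^k + (-f)^k\bigr]$, with the diagonal $k = 0$ contributing $c_n^2 (-zf)^n$. Next apply equation \eqref{eq:r-uv} with $u = \sqrt{z}$ and $v = \sqrt{f}$ to rewrite $z^k + (-f)^k = (zf)^{k/2} R_k\!\left(\frac{z-f}{\sqrt{zf}}\right)$ for $k \ge 1$, with $R_0 = 1$ handling $k = 0$ consistently. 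Finally use $f - z = (zB_f)\circ(zf)$ together with $\sqrt{zf} = \sqrt{z}\circ(zf)$ (valid since $zf \in \mathcal{F}_2$, so $\sqrt{zf} \in \mathcal{F}_1$ is a genuine power series) to pull the composition with $zf$ out in front, yielding $H_1(z, y) = \sum_{n,k \ge 0}(-1)^n c_n c_{n+k} z^{n + k/2} R_k(-\sqrt{z}\,y)$. This is in fact a polynomial in $z$ and $y$ because the half-integer powers of $z$ cancel against the odd-parity factor of $\sqrt{z}\,y$ hidden inside odd-index $R_k$ by the convention \eqref{eq:r-def}.

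Running the identical computation with $(d_n)$ in place of $(c_n)$ gives $q(z)q(-f) = H_2(z, B_f) \circ (zf)$, where $H_2$ has the same shape as $H_1$ but with $d_n$ in place of $c_n$. Subtracting the two factorizations and invoking the pseudo-involution condition yields $\bigl(H_1(z, B_f) - H_2(z, B_f)\bigr) \circ (zf) = 0$. Since $zf \in \mathcal{F}_2$ has leading term $z^2$, composition with $zf$ is injective on $\mathbb{R}[[z]]$ (one recovers the coefficients of a series $\psi$ recursively from those of $\psi(zf)$), so $H_1(z, B_f) = H_2(z, B_f)$, which is exactly equation \eqref{eq:poly-cheb}. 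The only delicate point is bookkeeping: verifying that the half-integer powers of $z$ in intermediate expressions collapse into integer powers in the final polynomial (a parity check that follows directly from \eqref{eq:r-def}), and treating the $k = 0$ diagonal separately so as to respect the convention $R_0 = 1$, which is the one exception to the identity \eqref{eq:r-uv}.
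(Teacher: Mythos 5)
Your proposal is correct and follows essentially the same route as the paper, which derives the factorization $p(z)p(-f)=H_1(z,B_f)\circ(zf)$ via the pairing of indices $(n,n+k)$, $(n+k,n)$ and the identity \eqref{eq:r-uv} with $u=\sqrt{z}$, $v=\sqrt{f}$, and then collects the result into the theorem by applying the same computation to $q$ and subtracting. Your explicit appeal to the injectivity of composition with $zf\in\mathcal{F}_2$ and the parity check that the half-integer powers of $z$ cancel are points the paper leaves implicit, but they are exactly the right justifications.
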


\begin{example} \label{ex:fib-cheb}
Let us apply Equation~\eqref{eq:rat-r-cheb} in the case of $g(z)=\frac{1}{1-z-z^2}$ from Example~\ref{ex:bf-fib}. With the notation as above, we have $p(z)=1$ and $q(z)=1-z-z^2$, so that $p(z)p(-f)=1$ and thus $H_1(z,B_f)=1$, so we only need to determine $H_2(z,B_f)$. For $q(z)=1-z-z^2$, we have $c_0=1$, $c_1=-1$, $c_2=-1$, so we only need to use $R_0(z)=1$, $R_1(z)=z$, and $R_2(z)=2+z^2$. Thus,
\[
\begin{split}
H_2(z,B_f)&=(c_0^2-c_1^2z+c_2^2z^2)\cdot1+(c_0c_1\sqrt{z}-c_1c_2z\sqrt{z})\left(-\sqrt{z}B_f\right)+c_0c_2z\left(2+\left(-\sqrt{z}B_f\right)^2\right)\\
&=(1-z+z^2)+(-\sqrt{z}-z\sqrt{z})\left(-\sqrt{z}B_f\right)-z\left(2+zB_f^2\right)\\
&=1-z+z^2+(z+z^2)B_f-2z-z^2B_f^2.
\end{split}
\]
Therefore, $H_1(z,B_f)=H_2(z,B_f)$ means
\[
1=1-z+z^2+(z+z^2)B_f-2z-z^2B_f^2,
\]
or, equivalently,
\[
zB_f^2-(1+z)B_f+(3-z)=0,
\]
which agrees with Example~\ref{ex:bf-fib}.
\end{example}

\small

\end{document}